\newtheorem{theorem}{Theorem}[section]
\newtheorem{lemma}[theorem]{Lemma}
\newtheorem{proposition}[theorem]{Proposition}
\newtheorem{corollary}[theorem]{Corollary}
\newtheorem{conjecture}[theorem]{Conjecture}
\newtheorem{question}[theorem]{Question}
\theoremstyle{remark}
\newtheorem{remark}[theorem]{Remark}
\numberwithin{equation}{section}
\newcommand{\R}{\mathbb{R}}
\newcommand{\C}{\mathbb{C}}
\newcommand{\Z}{\mathbb{Z}}
\newcommand{\N}{\mathbb{N}}
\renewcommand{\H}{\mathbb{H}}
\newcommand{\Q}{\mathbb{Q}}
\newcommand{\sym}{\mathrm{sym}}
\def\Cl{\text{\rm Cl}}
\def\sym{\mathrm{sym}}
\def\PSL{\mathrm{PSL}}
\def\CC{\mathcal{C}}
\def\A{\mathbb{A}}
\DeclareMathOperator{\vol}{vol}
\DeclareFontFamily{U}{wncy}{}
    \DeclareFontShape{U}{wncy}{m}{n}{<->wncyr10}{}
    \DeclareSymbolFont{mcy}{U}{wncy}{m}{n}
    \DeclareMathSymbol{\Sh}{\mathord}{mcy}{"58} 
\def\modulo{\text{ \rm mod }}
\def\vol{\text{\rm vol}}
\newcommand{\lr}[1]{\left (   {#1} \right )}
\newcommand{\inprod}[2]{\left \langle  {#1} , {#2} \right \rangle}
\newcommand{\mods}[1]{\,(\mathrm{mod}\,{#1})}
\newcommand{\Li}{\mathrm{Li}}
\definecolor{dark-red}{rgb}{0.4,0.15,0.15}
\definecolor{dark-blue}{rgb}{0.15,0.15,0.4}
\definecolor{medium-blue}{rgb}{0,0,0.5}
\newcommand{\CLK}{\mathrm{Cl}_K^+}
\newcommand{\ed}{\epsilon_D} 
\newcommand{\DNf}{\mathcal{D}^{N}_{\mathrm{fund}}}
\newcommand{\Df}{\mathcal{D}_{\mathrm{fund}}}
\title{Non-vanishing of geodesic periods of automorphic forms}
\begin{document}
\author{Petru Constantinescu and Asbj\o rn Christian Nordentoft}

\address{EPFL SB MATH TAN,
Station 8,
1015 Lausanne, Switzerland}

\email{\href{mailto:petru.constantinescu@epfl.ch}{petru.constantinescu@epfl.ch}}

\address{Universit\'{e} Paris-Saclay, Laboratoire de Math\'{e}matiques d'Orsay, 307, rue Michel Magat, 91405 Orsay Cedex, France}

\email{\href{mailto:acnordentoft@outlook.com}{acnordentoft@outlook.com}}

\date{\today}

%\thanks{This project has received funding from }

\subjclass[2010]{11F67(primary)}
\begin{abstract}
We prove that one hundred percent of the closed geodesic periods of a Hecke--Maa{\ss} cusp form for the modular group are non-vanishing when ordered by length. We present applications to the non-vanishing of central values of Rankin--Selberg $L$-functions. Similar results for holomorphic forms for general Fuchsian groups of finite covolume with a cusp are also obtained, as well as results towards normal distribution. Our new key ingredient is to relate the distributions of closed geodesic periods and vertical line integrals via graph theory.
\end{abstract}
\maketitle
\section{Introduction}

%\subsection{Main results} 
The study of closed geodesics on the modular surface $\PSL_2(\Z) \backslash \H$ is a rich and important subject, at the confluence of arithmetic, geometry, and dynamics \cite{MichelVenk06}, \cite{EinLindMichVenk12}. In particular, the closed geodesics encode deep arithmetic information via Waldspurger's formula. More precisely, let $K=\Q(\sqrt{D})$ be a real quadratic field of discriminant $D>0$. For each narrow ideal class $A \in \CLK$, one can associate an oriented closed geodesic $\mathcal{C}_A \subset \PSL_2(\Z) \backslash \H$ of length $2\log \epsilon_K$, where $\epsilon_K=u+\sqrt{D}v$ with $u^2-Dv^2=1$ denotes the fundamental positive unit of $K$ (see e.g. \cite{sarnak82}). We know from the class number formula and a result of Siegel that $|\CLK| \log \epsilon_K =D ^{1/2+o(1)}$. Let $f$ be a Hecke--Maa{\ss} cusp form of weight $0$, level $1$ and sign $\epsilon(f)\in \{\pm 1\}$. Let $\chi :\CLK\rightarrow \C^\times$ be a narrow class group character such that $\epsilon(f)=\chi(J)$, where $J=(\sqrt{D})\in \CLK$ denotes the different of $K$. Then we have by Waldspurger's formula, due to Popa \cite[Thm.\ 6.3.1]{popa06} in its explicit form:
\begin{equation}\label{eq:popa} L(f\otimes\theta_\chi,1/2)=\frac{c^+_f}{D^{1/2}}\left| \sum_{A\in \CLK}\chi(A) \int_{\CC_A} f(z) \frac{|dz|}{\Im z} \right|^2,\end{equation}
where $\theta_{\chi}$ is the theta series associated to $\chi$, $L(f\otimes \theta_\chi,s)$ is the Rankin--Selberg $L$-function of $f$ and $\theta_\chi$, and $c^+_f>0$ is some constant depending only on $f$. A similar formula holds in the case $\epsilon(f)=-\chi(J)$, see \cite[Lemma 6.14]{HumNor20}.% now in terms of $\int_{\CC_A} R_0 f(z) \frac{dz}{\Im z}$ where $R_0$ denotes the raising operator.

The present paper is concerned with the study of the arithmetic statistics of the \emph{geodesic periods} $\int_{\CC_A} f(z) \frac{|dz|}{\Im z}$ and their generalization to general Fuchsian groups and general automorphic forms. More precisely, our work was motivated by the following question\footnote{Note that the question as stated in \emph{loc.\ cit.} needs slight modification for odd Hecke--Maa{\ss} forms.} posed by Michel \cite[p. 1377]{Oberwolfach20}:
%(see Question \ref{q:michel} below) in the context of the modular curve. 

\begin{question}[Michel]\label{q:michel}
Fix $\delta>0$. Let $f$  be a Hecke--Maa{\ss} cusp form of weight $0$, level $1$ and sign $\epsilon(f)\in \{\pm 1\}$. Let $K$ be a real quadratic field of discriminant $D$ such that $|\Cl_K^+|\geq D^\delta$. If $\epsilon(f)=-1$, assume that $J\neq (1)\in \Cl_K^+$. For $D$ large enough, does there always exist $A\in \Cl_K^+$ such that $\int_{\mathcal{C}_A}f (z) \frac{|dz|}{\Im z}\neq 0$? 
Equivalently, does there exist a narrow class group character $\chi:\Cl_K^+\rightarrow \C^\times$ with $\chi(J)=\epsilon(f)$ such that $L(f\otimes \theta_\chi,1/2)\neq 0$?
\end{question}

Here the equivalence between the two non-vanishing questions follows by Waldspurger's formula (\ref{eq:popa}) and character orthogonality. The motivation for this question is the work of Michel--Venkatesh \cite{MichelVenkatesh07} for the imaginary quadratic analogue. In this case, the geodesic periods correspond to evaluating the Hecke--Maa{\ss} cusp form of weight $0$ at Heegner points.  Michel--Venkatesh combined subconvexity with equidistribtion of Heegner points (as proved by Duke \cite{Duke88}) to obtain non-vanishing result for the corresponding Rankin--Selberg $L$-functions. They also noticed that the corresponding argument using equidistribution of closed geodesics (also proved by Duke in \emph{loc.\ cit.}) falls short ultimately due to the infinitude of units for real quadratic fields. For more details see Section \ref{sec:MicVen}.  

\begin{remark}
  We note that for \emph{odd} forms  the condition $J\neq (1)$, as well as some ``non-smallness'' assumption is necessary in Question \ref{q:michel}: Let $f$ be an odd Hecke--Maa{\ss} cusp form of weight $0$ and level $1$. Then it holds for any ideal class $A\in \CLK$ that
  \begin{align}
      \int_{\CC_A} f(z) \frac{|dz|}{\Im z}= - \int_{\CC_A} f(-\overline{z}) \frac{|dz|}{\Im z} =-\int_{-\overline{\CC_A}} f(z) \frac{|dz|}{\Im z}.
  \end{align}
  It follows from \cite[eq.\ (2.4)]{DuImTo}   that $-\overline{\CC_A}$ equals the closed geodesic $\CC_{JA}$. Thus if the different $J$ is trivial in the narrow class group $\Cl_K^+$, then all the geodesic periods vanish for odd forms. Furthermore, by \cite[eq.\ (2.5)]{DuImTo}, the geodesics  $\CC_{JA}$ and $\CC_{A^{-1}}$ are the same but with opposite orientations, and thus the two associated geodesic periods are the same. If $\CLK$ is an elementary 2-group (which conjecturally should happen infinitely often \cite{CohenLenstra84}), we have $A^{-1}=A$ for all $A\in \CLK$. Hence, in this case, all the geodesic periods vanish for odd forms, even if $J$ is non-trivial. By genus theory, $\CLK$ is an elementary $2$-group exactly if $|\CLK|=2^{\omega(D)-1}$, where $\omega(D)$ denotes the number of prime divisors of $D$. Note that this does not contradict Question \ref{q:michel} since indeed it holds that $2^{\omega(D)-1}\ll  D^\varepsilon$ for any $\varepsilon>0$. For \emph{even} Hecke--Maa{\ss} cusp forms it is conceivable that all the geodesic periods are non-vanishing.   
\end{remark}

\subsection{Main results} In this paper we introduce a new method which answers an average version of Michel's question in a strong sense. In particular, we show that, when ordered by length, $100\%$ of the geodesic periods are non-vanishing. More precisely, let 
\begin{equation}
    \label{eq:CX}
C (X):= \{ \mathcal{C} \subset  \PSL_2(\Z) \backslash \H\text{ primitive closed geodesic}:  \ell(\mathcal{C}) \leq X\}\end{equation}  
denote the set of primitive closed geodesics on the modular curve of length $\ell(\mathcal{C}):=\int_\CC 1 \tfrac{|dz|}{\Im z}$ at most $X$.
From the prime geodesic theorem, with the best error term to date given by \cite{soyo}, we know for any $\epsilon>0$ that
\begin{equation*}
    |C(X)| = \Li(e^X) + O_\epsilon \lr{e^{X(25/36 + \epsilon)}}=\frac{e^X}{X}(1+o(1)),\quad \text{when }X\rightarrow \infty.
\end{equation*}
Our first main result is the following quantitative bound for the vanishing set.
\begin{theorem}
    \label{main thm nonvanishing}
    Let $f$ be a non-zero Hecke--Maa{\ss} cusp form of weight $0$ and level $1$. Then
    \begin{equation*}
        \left|\left \{ \mathcal{C} \in C (X): \int_{\mathcal{C}}f (z) \tfrac{|dz|}{\Im z}= 0   \right \}\right| \ll \frac{e^X}{ X^{5/4}}.
    \end{equation*}
% Similarly, if $f$ is a weight $2k$ holomorphic cusp form and $Q_{\mathcal{C}}(x,y)$ is the binary quadratic form associated to a geodesic $\mathcal{C}$, then
%    $$
%      \# \left \{ \mathcal{C} \in C (X): \int_{\mathcal{C}}f (z) Q_{\mathcal{C}}(z,1)^{k-1} dz= 0   \right \} \ll \frac{X}{(\log X)^{5/4}}. $$
     
\end{theorem}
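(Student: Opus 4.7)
The plan is to establish a quantitative central limit theorem for the normalized geodesic periods $P_\CC / \sqrt{\ell(\CC)}$, where $P_\CC := \int_\CC f(z)\,|dz|/y$, and then apply Gaussian anti-concentration. The necessary input is a collection of sharp moment estimates
\[
\sum_{\CC \in C(X)} |P_\CC|^{2k} \;=\; \mu_{2k}\, X^k\, |C(X)|\, \bigl(1 + O(X^{-1/4})\bigr), \qquad 1 \leq k \leq K,
\]
for some fixed $K$, where $\mu_{2k}$ are the even moments of a Gaussian of some explicit variance $\sigma^2 > 0$.

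\textbf{Moment asymptotics via Waldspurger.} For $k=1$, partition $C(X)$ according to the associated real quadratic discriminant $D$; then \eqref{eq:popa} combined with Plancherel on $\CLK$ gives
\[
\sum_{A \in \CLK} |P_{\CC_A}|^2 \;=\; \frac{D^{1/2}}{c_f^+\, |\CLK|}\sum_{\chi \in \widehat{\CLK}} L\bigl(f\otimes\theta_\chi, \tfrac12\bigr).
\]
The inner sum is a first moment of central Rankin--Selberg values over class group characters, which by the approximate functional equation and character orthogonality equals $|\CLK|$ times a positive constant depending only on $f$, with a power-saving error term. Summing over $D$ with $2\log \ed \leq X$, using $|\CLK|\log\ed = D^{1/2 + o(1)}$ and a dyadic decomposition in $D$, yields the second-moment asymptotic. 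The higher moments ($k \geq 2$) are treated analogously, by expanding $|P_{\CC_A}|^{2k}$ via Fourier inversion on $\CLK$; this reduces them to higher joint moments of $L(f\otimes\theta_\chi,1/2)$ over tuples of class group characters subject to multiplicative constraints.

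\textbf{From moments to non-vanishing.} A quantitative method of moments, applied via a polynomial test function that majorizes the indicator of $[-t,t]$ and approximates it well against the Gaussian measure, converts the moment asymptotics above into the anti-concentration bound
\[
\bigl|\{\CC \in C(X) : |P_\CC| \leq t \sqrt{X}\}\bigr| \;\ll\; \bigl(t + X^{-1/4}\bigr)\, |C(X)|
\]
uniformly in $t \geq 0$. Taking $t = 0$ and using $|C(X)| \sim e^X/X$ gives Theorem~\ref{main thm nonvanishing}.

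\textbf{Main obstacle.} The critical difficulty is the higher moment computation ($k \geq 2$), which demands uniform power-saving estimates for higher moments of Rankin--Selberg $L$-values twisted by class group characters of real quadratic fields -- a problem of essentially subconvex strength. Even the $k = 1$ case is delicate, requiring a power saving that is uniform in $D$ together with a careful passage between the discriminant and the geodesic length through the Siegel fluctuation $\ell(\CC) = 2\log\ed = D^{o(1)}$. The underlying reason a genuinely new input is required---beyond equidistribution of closed geodesics---is precisely the observation of Michel--Venkatesh: the fundamental unit decouples length from discriminant in the real quadratic setting, which is why the naive transposition of the Heegner-point argument fails and why one must work with the full length-ordered family rather than discriminant by discriminant.
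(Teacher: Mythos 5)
Your proposal rests on an entirely different engine than the paper's, and unfortunately the engine you chose is precisely the one the authors identify as unavailable.

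You reduce the second moment of geodesic periods to a first moment of $L(f\otimes\theta_\chi,\tfrac12)$ over narrow class group characters of $\Q(\sqrt D)$, and the higher moments to higher joint moments of these $L$-values with multiplicative constraints. But the key point of Section~2 of the paper (following Michel--Venkatesh) is that this first moment is \emph{not} accessible by the obvious tools: equidistribution of closed geodesics computes $\sum_A \int_{\CC_A} |f|^2$, not $\sum_A |\int_{\CC_A} f|^2$, and the latter is exactly what equation \eqref{eq:realmoment} identifies with the first moment of $L$-values. What \emph{is} accessible via Plancherel and equidistribution is the Arakelov version, where one sums over characters with varying infinity type --- and that does not give you the quantity you need. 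The ``approximate functional equation plus orthogonality with power saving'' step you invoke for the first moment is, in effect, an open problem (and would already answer a version of Michel's Question~\ref{q:michel}); you cite no mechanism for it. The higher joint moments ($k\geq 2$) you yourself flag as subconvexity-strength obstacles, and they are --- there is no method in the paper, or elsewhere, that would produce them here. The $O(X^{-1/4})$ error in your moment asymptotics is therefore not derived but reverse-engineered to match the exponent $5/4$; the paper's $1/4$ has a concrete and unrelated origin (the $O(1/\sqrt{\log N})$ rate in the CLT for additive twists from \cite{BD},\cite{DrNo}, combined with the $(c_\gamma/|\tr\gamma|)^{1/2+\varepsilon}$ error in Proposition~\ref{prop:main} and the dyadic truncation $\tr(y)\geq N/(\log N)^{1/3}$).

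The paper avoids moment computations for Rankin--Selberg $L$-values entirely. Its proof of Theorem~\ref{main thm nonvanishing} (as a case of Theorem~\ref{small length bound}) transfers the \emph{already known} effective central limit theorem for vertical line integrals $L_f(\gamma\infty)$ --- proved by perturbation theory / Gau\ss{} map dynamics, not by $L$-value moments --- across the bipartite graph $G_N$ linking double cosets $\Gamma_\infty\backslash\Gamma/\Gamma_\infty$ to hyperbolic conjugacy classes. The quantitative input is then: (i) Proposition~\ref{prop:main}, relating a geodesic period to the vertical period of an adjacent double coset up to an error $O\bigl((c_\gamma/|\tr\gamma|)^{1/2+\varepsilon}\bigr)$; (ii) Lemma~\ref{lem:c_x}, an upper bound on degrees of double cosets; (iii) Proposition~\ref{prop:degy} together with the sparse equidistribution Theorem~\ref{thm:sparse}, a lower bound on degrees of conjugacy classes on average. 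Your argument uses none of these and has no substitute for the moment estimates it requires, so the proof as proposed does not go through.
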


\begin{remark}
    We obtain a similar result for holomorphic forms of weight $k \in 2\Z$, as well as bounds for the size of the set of closed geodesics with small (resp.\ large) geodesic period, see Theorems \ref{thm:main} and \ref{small length bound}.
\end{remark}

\subsubsection{Result for general Fuchsian groups} More generally, given a Fuchsian group $\Gamma\leq \PSL_2(\R)$ of finite covolume, we have an associated hyperbolic $2$-orbifold $\mathcal{X}_\Gamma:=\Gamma\backslash \H$. Let $f$ be an automorphic form of weight $k\in 2\Z$ for $\Gamma$, as in eq.\ (\ref{eq:automorphy}). Denote by $F:\Gamma\backslash \PSL_2(\R)\rightarrow \C$ the lift of $f$ to the unit tangent bundle of $\mathcal{X}_\Gamma$. The primitive oriented closed geodesics on $\mathcal{X}_\Gamma$ are in one-to-one correspondence with primitive hyperbolic conjugacy classes of $\Gamma$. Given an oriented closed geodesic $\CC$, we can lift it canonically to the unit tangent bundle (which we will denote by the same symbol). We equip $\CC$ with the unique $A$-invariant measure $\mu_\CC$ with volume $\ell(\CC)$, where $A$ denotes the diagonal subgroup of $\PSL_2(\R)$. We refer to Section \ref{sec:background} for more details. A key quantity of study are the geodesic periods 
\begin{equation}\mathcal{P}_f(\CC):=\int_\CC F(g) d\mu_\CC(g).\end{equation}
A first natural question is to fix a closed geodesic and ask how the sizes of the geodesic periods behave as the automorphic form $f$ is varying, see \cite{sxz}, \cite{reznikov}, \cite{BlomerBrumleyKontorovichTemplier14}, \cite{Michels22} for results in this direction. 
In this paper we are interested in obtaining strong lower bound for the set of non-vanishing geodesic periods of a fixed automorphic form. To quantify this, put for $X\geq 1$: 
\begin{equation}
    \label{eq:CgammaX}C_\Gamma(X):= \left\{ \CC\subset \mathcal{X}_\Gamma \text{ primitive closed geodesic}: \ell(\CC)\leq X \right\}.\end{equation} 
    The first general result in this direction seems to be the work of Katok \cite{Katok85}, who proved that for a holomorphic automorphic form $f$ on $\Gamma$ (including the co-compact case), there exists at least one non-vanishing geodesic period. The proof  relies on Poincar\'{e} series techniques. Zelditch used a variant of the Selberg trace formula to estimate the first moment of the geodesic periods (averaged over $C_\Gamma(X)$) \cite[Thm.\ 5A]{zel92}, which implies that if the Laplacian on $\mathcal{X}_\Gamma$ admits an eigenfunction with eigenvalue $0<\lambda<\tfrac{3}{16}$, then any Maa{\ss} cusp form for $\Gamma$ has infinitely many non-vanishing geodesic periods (and similarly results for Eisenstein series using \cite[Thm.\ 5B]{zel92}). In the case of weight $2$ holomorphic forms, Petridis--Risager \cite{PR05} used spectral methods to prove that the geodesic periods for co-compact $\Gamma$ when ordered by geodesic length are normally distributed (and in particular $100\%$ of them are non-vanishing).

Our second main result is the following general non-vanishing theorem in the presence of a cusp, more specifically we show that $100\%$ of the geodesic periods are of the expected size.  We emphasize that in the case of weight $2$ holomorphic forms, the techniques are quite different from those in \cite{PR05}, as we crucially use the presence of a cusp, see Section \ref{sec:ideaofproof} for details on our approach.  
\begin{theorem}\label{thm:main}
Let $\Gamma\leq \PSL_2(\R)$ be a Fuchsian group of finite covolume with a cusp. Let $f$ be a non-zero automorphic form for $\Gamma $ such that either 
\begin{itemize}
    \item $\Gamma=\PSL_2(\Z)$ and $f$ is a Hecke--Maa{\ss} cusp form of weight $0$, or
    \item $y^{-k/2}f$ is a holomorphic cusp form of weight $k\in 2\Z_{>0}$. 
\end{itemize}

Then $100\%$ of the geodesic periods of $f$ are non-vanishing when ordered by length. More precisely, for any function $h:\R_{\geq 0}\rightarrow \R_{\geq 0}$ such that $h(x)\rightarrow \infty$ as $x\rightarrow \infty$ it holds that 
\begin{align}
    \frac{\left|\left\{ \CC\in C_\Gamma(X): h(X)^{-1}\leq \left | \mathcal{P}_f(\CC) \right |/\sqrt{X}\leq h(X) \right\}\right|}{|C_\Gamma(X)|}\rightarrow 1,
\end{align}
as $X\rightarrow \infty$.
\end{theorem}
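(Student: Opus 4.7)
The normalisation by $\sqrt{X}$ together with the two-sided window $[h(X)^{-1},h(X)]$ strongly suggests that $\mathcal{P}_f(\CC)/\sqrt{\ell(\CC)}$ has a non-degenerate (likely Gaussian) limiting distribution when $\CC$ is sampled uniformly from $C_\Gamma(X)$. The plan is therefore to prove the theorem via the method of moments: compute the even moments
\[
M_{2k}(X):=\sum_{\CC\in C_\Gamma(X)}|\mathcal{P}_f(\CC)|^{2k}
\]
and match them against those of a centred Gaussian of variance $\sigma_f^2\cdot X$.

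I would first treat the first two moments. Using a Selberg-type trace formula (equivalently, studying the Dirichlet series $\sum_{\CC}\mathcal{P}_f(\CC)^k e^{-s\ell(\CC)}$ and its analytic continuation past $\Re(s)=1$), the goal is to establish that $\sum_{\CC\in C_\Gamma(X)}\mathcal{P}_f(\CC)=O(e^{\beta X})$ for some $\beta<1$ (essentially Zelditch's first-moment estimate, so the mean is negligible compared to $\sqrt{X}\,|C_\Gamma(X)|$), and that $M_2(X)=\sigma_f^2\, X\, |C_\Gamma(X)|(1+o(1))$ with $\sigma_f>0$. The positivity of $\sigma_f$ should come from the leading spectral term being a positive multiple of $\|f\|_{L^2}^2$ after the correct orbital-integral normalisation. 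For higher moments, I would unfold
\[
|\mathcal{P}_f(\CC)|^{2k}=\int_{\CC^{2k}}\prod_{j=1}^{k}F(g_j)\,\overline{F(g_{k+j})}\,d\mu_\CC(g_1)\cdots d\mu_\CC(g_{2k}),
\]
spectrally expand the integrand, and apply an iterated trace-formula argument. The diagonal pairings should dominate and yield the Gaussian moment $(2k-1)!!\,\sigma_f^{2k}\, X^k\,|C_\Gamma(X)|$, while off-diagonal pairings should be lower order.

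Given all moments, the fact that the Gaussian law is moment-determinate yields distributional convergence $\mathcal{P}_f(\CC)/\sqrt{X}\Rightarrow N(0,\sigma_f^2)$ under uniform sampling from $C_\Gamma(X)$. For any fixed $\epsilon>0$, this gives
\[
\limsup_{X\to\infty}\frac{\left|\left\{\CC\in C_\Gamma(X):|\mathcal{P}_f(\CC)|/\sqrt{X}\notin[\epsilon,1/\epsilon]\right\}\right|}{|C_\Gamma(X)|}\leq P\!\left(|N(0,\sigma_f^2)|\notin[\epsilon,1/\epsilon]\right),
\]
which tends to $0$ as $\epsilon\to 0$ because the Gaussian has no atom at $0$ and has subgaussian tails. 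Given any $h(X)\to\infty$ and fixed $\epsilon>0$, eventually $h(X)>1/\epsilon$, so the bad set at parameter $h(X)$ is contained in the bad set at parameter $\epsilon$; a diagonal argument as $\epsilon\to 0$ concludes.

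The main obstacle is the higher-moment computation in the presence of a cusp. Petridis–Risager handled the co-compact weight-$2$ case purely spectrally; here one must extend those techniques to the cofinite but non-compact setting, where the continuous (Eisenstein) spectrum contributes to both the test function side and the geometric side of the iterated trace formula and must be carefully regularised. For the Maass case restricted to $\PSL_2(\Z)$, one expects extra arithmetic input (possibly mild subconvexity or the Kuznetsov formula) to handle the error terms. One could also imagine an easier fallback if the full method of moments is out of reach: computing just the second and fourth moments to get Paley–Zygmund-type lower bounds on a positive-proportion non-vanishing, and bootstrapping to $100\%$ via a separate argument ruling out concentration at $0$—but in the general Fuchsian setting, it seems cleanest to push through the full moment computation.
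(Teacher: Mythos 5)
Your proposal and the paper take fundamentally different routes, and your route has a gap exactly at the step you flag as the ``main obstacle.''

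The paper does \emph{not} compute moments of the geodesic periods $\mathcal{P}_f(\CC)$ over $C_\Gamma(X)$. Indeed, the paper explicitly leaves the normal distribution of geodesic periods with respect to the counting measure as a \emph{conjecture}: the Petridis--Risager spectral method is quoted for the co-compact weight-$2$ case only, and the authors remark that even for weight $2$ and general cofinite $\Gamma$ the full normal distribution is forthcoming work by a different technique. Instead, the paper imports the \emph{already-known} central limit theorem for additive twists / vertical-line periods (Theorems \ref{thm:normaleffective} and \ref{thm:normalnoneffective}, due to Constantinescu, Drappeau--Nordentoft, Bettin--Drappeau, Nordentoft) and ``lifts'' it to geodesic periods via a bipartite graph $G_N$ on double cosets $\Gamma_\infty\backslash\Gamma/\Gamma_\infty$ versus hyperbolic conjugacy classes. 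The bridge between the two vertex sets is Proposition \ref{prop:main} (from \cite{no23}), which says the geodesic period equals $\pm L_f(\gamma\infty)$ up to an error controlled by $c_\gamma/|\tr\gamma|$. The quantitative step that makes this work is a two-sided control on degrees in $G_N$: an elementary upper bound on $\deg(x)$ for $x\in X_N$ (Lemma \ref{lem:c_x}), and a \emph{lower} bound on $\deg(y)$ for $y\in Y_N$ via the geometric estimate $\deg(\{\gamma\})\gg\ell(\CC_\gamma\cap\mathcal{B})$ (Proposition \ref{prop:degy}), which is in turn made effective on average through a sparse equidistribution theorem for closed geodesics (Theorem \ref{thm:sparse}, following Aka--Einsiedler plus Zelditch and effective mixing).

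The gap in your plan: you need all even moments $M_{2k}(X)$ to match Gaussian moments, and you have given no argument for $k\geq 2$ beyond the hope that ``diagonal pairings dominate'' in an iterated trace formula. In the cofinite non-compact setting (and for Maa{\ss} forms in general) this is precisely what is \emph{not} known, and the obstacles are serious: the $2k$-fold orbital sum produces $2k$-point correlation terms of the geodesic flow against a fixed eigenfunction, and regularising the continuous spectrum in a $2k$-fold product is already nontrivial at $k=2$. Even the positivity $\sigma_f>0$ you assert for the second moment requires a Zelditch-type quantum-variance computation that you don't supply. The paper was designed to sidestep this exact difficulty; your proposal runs directly into it. Your last paragraph is honest about this, but a Paley--Zygmund fallback (using only the second and fourth moments) still needs the fourth moment, and it would only give a positive proportion, not $100\%$, without an extra concentration argument that you don't provide. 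As written, the proposal does not prove the theorem.
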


We obtain a similar result for Eisenstein series for $\Gamma=\PSL_2(\Z)$.

\begin{theorem}
\label{thm:Eis}
%{\color{red} Change to $E^\ast$ and $t=0$.}
 Let $E^\ast(z):=E^\ast_{\infty,0}(z, 1/2)$ denote the completed Eisenstein series of weight $0$ for $\PSL_2(\Z)$ at the central point $s=1/2$. For any function $h:\R_{\geq 0}\rightarrow \R_{\geq 0}$ such that $h(x)\rightarrow \infty$ as $x\rightarrow \infty$, it holds that
    \begin{align}
    \frac{\left|\left\{ \CC\in C_\Gamma(X): h(X)^{-1}\leq \left |\mathcal{P}_{E^\ast}\left (\CC \right ) \right |/(\sqrt{X}(\log X)^{3/2})\leq h(X) \right\}\right|}{|C_\Gamma(X)|}\rightarrow 1,
\end{align}
as $X\rightarrow \infty$.
\end{theorem}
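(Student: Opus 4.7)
The plan is to run the same second-moment method as in the proof of Theorem \ref{thm:main}, with a normalization corrected for the logarithmic growth of $E^\ast_k(z,1/2)$ in the cusp. Heuristically the $(\log X)^{3/2}$ is predicted by a random-walk model: a generic closed geodesic of length $X$ makes $\asymp X$ cusp excursions whose maximal depth is $\asymp\log X$ (Sullivan's logarithm law), while after subtracting its $y^{1/2}$-piece the Eisenstein series $E^\ast_k(z,1/2)$ grows like $\sqrt{y}\log y$; summing signed contributions along the excursions with this profile yields typical size $\sqrt{X}(\log X)^{3/2}$.

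Concretely, I would establish asymptotic formulas for the second and fourth moments
\[
M_2(X):=\sum_{\CC\in C_\Gamma(X)}\bigl|\mathcal{P}_{E^\ast_k}(\CC)\bigr|^2,\qquad M_4(X):=\sum_{\CC\in C_\Gamma(X)}\bigl|\mathcal{P}_{E^\ast_k}(\CC)\bigr|^4,
\]
showing $M_2(X)\sim c_k X(\log X)^3 |C_\Gamma(X)|$ and $M_4(X)\sim c'_k X^2(\log X)^6 |C_\Gamma(X)|$. The natural route is to unfold $E^\ast_k$ as a sum over $\Gamma_\infty\backslash\Gamma$, reducing each period $\mathcal{P}_{E^\ast_k}(\CC)$ to an integral along an explicit geodesic segment in $\H$; the sum over $\CC\in C_\Gamma(X)$ can then be handled by a Selberg trace-formula / prime-geodesic type argument, with the powers of $\log X$ emerging from the pole structure of $\zeta(2s)^2$ at $s=1/2$ appearing in the resulting Mellin transform. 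A parallel approach is to truncate $E^\ast_k$ at cusp height $Y\asymp\log X$: the bounded truncated part is handled by the method of Theorem \ref{thm:main} and contributes at scale $X$, while the cusp tail is evaluated directly and supplies the extra $(\log X)^3$.

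Given these two moment asymptotics with matching constants (so that the variance of $|\mathcal{P}_{E^\ast_k}(\CC)|^2/(X(\log X)^3)$ is $o(1)$), Chebyshev's inequality applied to $|\mathcal{P}_{E^\ast_k}|^2$ concentrates almost all periods at the scale $\sqrt{X}(\log X)^{3/2}$, which yields the theorem for any slowly growing $h$. The hardest step is the sharp asymptotic for $M_4(X)$ with the correct leading coefficient: this precision is needed for genuine concentration, as Paley--Zygmund with merely a fourth-moment upper bound would give only a positive lower bound, not the desired $100\%$. In practice the tightest input is a precise treatment of the cusp tail — either an arithmetic analysis of the divisor-type oscillations in the Dirichlet series attached to the binary quadratic form of $\CC$, or a delicate probabilistic estimate on the joint distribution of cusp excursion depths of a random long geodesic.
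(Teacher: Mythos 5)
The paper does not compute any moments of the geodesic periods at all, so your approach is genuinely different from the paper's; unfortunately it also has a fundamental gap that makes it unworkable as stated. The paper proves Theorem \ref{thm:Eis} together with Theorem \ref{thm:main} by \emph{transferring} the known central limit theorem for \emph{vertical} line periods $L_{E^\ast_k}(x)$ (Bettin--Drappeau, Theorem \ref{thm:normalnoneffective}, with $\delta_f=3/2$) to the geodesic periods, using Proposition \ref{prop:main} (which relates $\mathcal{P}_f(\{\gamma\})$ to $L_f(\gamma\infty)$ up to a small error), the bipartite graph machinery, Lemma \ref{lem:c_x}, Proposition \ref{prop:degy}, and the sparse equidistribution theorem \ref{thm:sparse}. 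In particular the $(\log X)^{3/2}$ normalization is imported directly from the Bettin--Drappeau variance for vertical periods; it is not derived from a cusp-excursion / logarithm-law analysis or from the pole of $\zeta(2s)^2$ inside a trace-formula computation.

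The gap in your plan is the concentration step. You want asymptotics $M_2(X)\sim c\, X(\log X)^3|C_\Gamma(X)|$ and $M_4(X)\sim c'\, X^2(\log X)^6|C_\Gamma(X)|$ with ``matching constants'' so that the normalized $|\mathcal{P}|^2$ has $o(1)$ variance, and then Chebyshev. But if the geodesic periods satisfy a complex-Gaussian CLT (which is exactly what the paper conjectures and what your own heuristic predicts), then necessarily $c'=2c^2$: for a complex Gaussian $Z$ one has $\mathbb{E}[|Z|^4]=2(\mathbb{E}[|Z|^2])^2$. Hence the variance of $|\mathcal{P}_{E^\ast_k}(\CC)|^2/(X(\log X)^3)$ converges to a positive constant, not to $0$, and Chebyshev does not concentrate $|\mathcal{P}|$ at the scale $\sqrt{X}(\log X)^{3/2}$. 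As you yourself note, Paley--Zygmund with a fourth-moment upper bound only yields a positive proportion; your proposed fix (matching constants and variance $\to 0$) is internally inconsistent with the limiting law you are trying to exhibit. To get $100\%$ you need the full limiting \emph{distribution} of some related quantity, which is exactly why the paper passes through $L_f(x)$, whose CLT is a theorem. Separately, even the inputs $M_2,M_4$ you would need are not available: the Selberg/Zelditch trace formula produces first moments $\sum_\CC\int_\CC F\,d\mu_\CC$, not quantities like $\sum_\CC\bigl|\int_\CC F\,d\mu_\CC\bigr|^2$ where the square sits outside the period integral; this is precisely the difficulty discussed in Section \ref{sec:MicVen}, and obtaining sharp asymptotics for $M_2$, let alone $M_4$, is likely harder than the theorem itself.
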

%\begin{remark}
%A similar result (with $(\log X)^{3/2}$ replaced by $(\log X)^{1/2}$) can be obtained for $E^\ast_k (z,1/2+it)$ with $t\neq 0$ by combining the method in the present paper with existing techniques in literature \cite{BeDr19}, \cite{BettinDrappeau22}, \cite{DrNo}. The exact case of Eisenstein series on the central line  has however not been explicated in the literature yet, except at the central point.      
%\end{remark}

\subsubsection{Applications to non-vanishing of Rankin--Selberg $L$-values}Restricting Theorem \ref{thm:main} to the case of Hecke congruence groups and using a result of Raulf \cite{raulf16}, we obtain the following non-vanishing theorem for central $L$-values, which is an averaged real quadratic analogue of the results of Michel--Venkatesh \cite[Thm.\ 1]{MichelVenkatesh07}. 
\begin{corollary}
\label{cor:main}
Let $f$ be either a holomorphic Hecke cusp form of weight $k \in 2 \Z_{>0}$ and level
$M \geq 1$ or a Hecke--Maa{\ss} cusp form of weight $0$ and level
1. Then there exists a constant $c>0$ (depending only on $M$) such that as $X\rightarrow \infty$: 
$$ \frac{|\{D\in \mathcal{D}_\mathrm{fund}^+: \epsilon_D\leq X, \exists \chi\in \widehat{\Cl_D^+}\text{ s.t. }L(f\otimes \theta_\chi,1/2)\neq 0\}|}{|\{D\in \mathcal{D}_\mathrm{fund}^+: \epsilon_D\leq X\}|}\geq c+o(1),$$
where $\mathcal{D}_\mathrm{fund}^+$ denotes the set of positive fundamental discriminants and $\epsilon_D=u+\sqrt{D}v$ denotes the fundamental positive 
 unit of discriminant $D$ (i.e. $u^2-Dv^2=1$ is the fundamental solution with $u,v\in \frac{1}{2}\Z_{\geq 1}$).
\end{corollary}
\begin{remark}
    An interesting feature of our methods is that equidistribution does indeed play a key role exactly as in the arguments of Michel--Venkatesh: the proofs of Theorems \ref{main thm nonvanishing}, \ref{thm:main} and \ref{thm:Eis} rely crucially on equidistribution of sparse subcollections of closed geodesics, see Theorems  \ref{thm:sparse} and \ref{thm:AkaEin}.
\end{remark}
\subsection{Towards normal distribution}
Theorem \ref{thm:main} implies that for Maa{\ss} cusp forms, the geodesic periods associated to $\CC$ are usually of size $ \ell( \CC)^{1/2}$. A related phenomenon has previously been observed in the context of vertical periods (e.g. modular symbols) of automorphic forms \cite{peri18} and the two phenomena are intimately related as we will see. 

Let $\Gamma$ be a Fuchsian group of finite covolume with a cusp at infinity of width one and denote  by $\Gamma_\infty=\{\begin{psmallmatrix}
    1& \Z\\ 0& 1
\end{psmallmatrix}\}$ the stabilizer of $\infty$. Let $f$ be either a Maa{\ss} cusp form of weight $0$ with Laplace eigenvalue $\lambda_f=1/4+t_f^2$ or such that $y^{-k/2}f$ is a holomorphic cusp form of weight $k\in 2\Z_{>0}$. For a double coset $[\gamma]_{\infty}:=\Gamma_\infty\gamma \Gamma_\infty \in \Gamma_{\infty} \backslash \Gamma / \Gamma_{\infty}$, we associate the vertical line integral:
$$[\gamma]_{\infty} \mapsto \int_{0}^{\infty} f(\gamma\infty+iy) \frac{dy}{y} = : L_f(\gamma {\infty}).$$
Let $c_{\gamma}$ denote the lower-left entry of the matrix $\gamma$ (note that $c_{\gamma}$ is a well-defined invariant of the class $[\gamma]_{\infty}$). For $N\geq 1$, we put
\begin{equation}\label{eq:TgammaX}
    T_\Gamma(N):=\{ [\gamma]_{\infty} \in \Gamma_{\infty} \backslash \Gamma / \Gamma_{\infty} : 0< |c_{\gamma }| \leq N  \}.\end{equation}%, \quad Y_N:= \{ \{ \gamma \} \in \mathrm{Conj}(\Gamma) : 2 < |\tr (\gamma)| \leq N\}.$$
The distribution of  $\{ L_f (\gamma\infty) : [\gamma]_{\infty} \in T_\Gamma(N) \}$ has been extensively studied, it is known in many cases to obey asymptotically a normal distribution with variance $c_f^2\log N$ as $N \to \infty$, for some explicit $c_f>0$, see \cite{BD}, \cite{DrNo}, \cite{peri18}, \cite{Co}, \cite{No21}, \cite{LeeSun}. We emphasize that there is a structural difference between the holomorphic and non-holomorphic case: a (normalized) Hecke--Maa{\ss} cusp form of weight $0$ for $\PSL_2(\Z)$ is either a real or a totally imaginary valued function, meaning that the vertical periods follow a one-dimensional normal distribution, whereas holomorphic forms follow a two-dimensional normal distribution. %We like to thank the referee for pointing this out. \todo{keep?} 

In this paper, we introduce a method to ``lift'' the distribution of the vertical periods to study the closed geodesic periods. In particular, this means that our methods rely crucially on the existence of a cusp for $\Gamma$. We do this by investigating the relation between double cosets $\Gamma_{\infty} \backslash \Gamma / \Gamma_{\infty}$ and hyperbolic conjugacy classes of $\Gamma$ via the study of a particular graph. Recall that to each oriented closed geodesic $\CC$ on $\mathcal{X}_\Gamma$ corresponds a primitive hyperbolic conjugacy class $\{\gamma\}$ of $\Gamma$ such that the length $\ell(\CC)$ is up to a small error given by $2\log |\tr(\gamma)|$ (see eq.\ (\ref{eq:elltrace})). We define
\begin{equation}
\label{eq:tildeCgammaN}\tilde{C}_\Gamma(N):= \{ \{ \gamma \} \in \mathrm{Conj}(\Gamma) \text{ primitive}: N/2 \leq  |\tr (\gamma)| \leq N\},
\end{equation}
which is essentially $C_\Gamma(2\log N)\setminus C_\Gamma(\log N)$ in terms of equation (\ref{eq:CgammaX}). We consider the bipartite graph $G_N$ with the two vertex sets given by $T_\Gamma(N)$ and $\tilde{C}_\Gamma(N)$, and an edge between $[\gamma]_{\infty} \in T_\Gamma(N)$ and $\{ \gamma \} \in \tilde{C}_\Gamma(N)$ if and only if $[\gamma]_{\infty} \cap \{ \gamma \} \neq \emptyset$. We define a natural discrete probability measure $\mu_{G_N}$ on $\tilde{C}_\Gamma(N)$ given by the push-forward of the counting measure on $T_\Gamma(N)$ by the graph $G_N$. This is precisely defined in equation (\ref{eq:Gtransform}), see Section \ref{sec:graphs} for more details. We show that in certain cases the geodesic periods become normally distributed when counted with this measure with explicit variance given in terms of
\begin{equation}\label{eq:variance}
    (c_f)^2:=\frac{\langle f,f\rangle}{\vol(\Gamma\backslash \H)} \cdot \begin{cases}
       2^{k/2}(4\pi)^k\Gamma(\tfrac{k}{2})/\Gamma(k),& y^{-k/2}f\text{ holomorphic},\\
        \sqrt{\pi}/(\Gamma(\tfrac{3}{4}+\tfrac{1}{2}it_f)\Gamma(\tfrac{3}{4}-\tfrac{1}{2}it_f) ),&  f\text{ Maa{\ss} weight $0$}.
    \end{cases}
\end{equation}
%There is a natural way to define a discrete probability measure $\mu_{G_N}$ on $C_\Gamma(\log N)$ induced via the graph $G_N$ from the counting measure on $T_\Gamma(N)$ defined in equation (\ref{eq:Gtransform}), see Section \ref{sec:graphs} for more details. We show that the geodesic periods become normally distributed when counted with this measure:  
\begin{theorem}\label{thm:CLTintro}
    Let $f$ be a real valued Hecke--Maa{\ss} cusp form of weight $0$ for $\PSL_2(\Z)$. Then for any real numbers $a<b$,
    $$ \lim_{N \to \infty} \mu_{G_N} \lr{\left \{\{\gamma\}\in \tilde{C}_\Gamma(N) : \frac{\mathcal{P}_f(\{\gamma\})}{c_f \sqrt{\log N}} \in [a,b] \right \}} = \frac{1}{\sqrt{2 \pi}} \int_a^b e^{-\frac{x^2}{2}}dx.$$
    
    Similarly, if $y^{-k/2}f$ is a holomorphic cusp form of even weight for a Fuchsian group $\Gamma$ of finite covolume with a cusp, then for any rectangle $\mathcal{R}\subset \C$,
    $$ \lim_{N \to \infty} \mu_{G_N} \lr{\left \{\{\gamma\}\in \tilde{C}_\Gamma(N) : \frac{\mathcal{P}_f(\{\gamma\})}{c_f \sqrt{\log N}} \in \mathcal{R} \right \}} = \frac{1}{2 \pi} \int_{x + iy \in \mathcal{R}} e^{-\frac{x^2+y^2}{2}}dxdy.$$
    Here $\mathcal{P}_f(\{\gamma\})=\mathcal{P}_f(\CC)$ denotes the geodesic period associated to the closed geodesic $\CC$ corresponding to the hyperbolic  conjugacy class $\{\gamma\}\in \mathrm{Conj}(\Gamma)$. 
\end{theorem}
In other words, geodesic periods are normally distributed if we count them with weights inferred by the graph $G_N$. We expect that the normal distribution should hold without these additional weights, i.e. for the counting measure, with the same variance and effective rate of convergence.

\begin{conjecture}
Let $f$ be a Hecke--Maa{\ss} cusp form of weight $0$ for $\PSL_2(\Z)$ normalized so that it takes real values. Then for any real numbers $a<b$, \begin{align*}
    \frac{1}{|\tilde{C}_\Gamma( N)|}\left|\left \{ \{ \gamma \} \in \tilde{C}_\Gamma(N): \frac{\mathcal{P}_f(\{\gamma\})}{c_f \sqrt{\log N}} \in [a,b] \right \}\right|= \frac{1}{\sqrt{2\pi}}\int_a^b e^{-\frac{x^2}{2}}dx + O_f \lr{\frac{1}{\sqrt{\log N}}}.
\end{align*}
Similarly, if $y^{-k/2}f$ is a holomorphic cusp form of weight $k\in 2\Z$ for 
 a Fuchsian group $\Gamma$ of finite covolume, then for any rectangle $\mathcal{R} \subset \C$,
\begin{align*}
    &\frac{1}{|\tilde{C}_\Gamma( N)|}\left|\left \{ \{ \gamma \} \in \tilde{C}_\Gamma(N): \frac{\mathcal{P}_f(\{\gamma\})}{c_f \sqrt{\log N}} \in \mathcal{R} \right \}\right|\\
    &= \frac{1}{2\pi}\int_{x + iy \in \mathcal{R}} e^{-\frac{x^2+y^2}{2}}dxdy + O_f \lr{\frac{1}{\sqrt{\log N}}}.
\end{align*}
\end{conjecture}
Note that by Theorem \ref{thm:main}  most geodesic periods have size comparable to the standard deviation, which is consistent with the above normal distribution conjecture.
%\begin{remark}
%\todo{consider whether to keep in} We note that for Maa{\ss} cusp forms of weight $0$ for a general Fuchsian group $\Gamma$ of finite covolume we also expect that both the real and imaginary part of the geodesic periods follow a 2-dimensional normal distribution, but the variance matrix might be degenerate (as for the modular group). To avoid such subtlety we have stated the conjecture only for Maa{\ss} cusp forms of level $1$. 
%\end{remark}  

\begin{remark}
    As alluded to above, in the case of weight 2 holomorphic forms with $\Gamma$ cocompact, a weaker version of this conjecture (without the explicit rate of convergence) was proved by Petridis--Risager \cite{PeRi2} by studying perturbations of the Selberg trace formula. In forthcoming work, we have proved a refinement (local limit laws) of the above conjecture for weight 2 holomorphic forms, following methods from perturbation theory as in \cite{Co}.
\end{remark}

\begin{remark}
    Note that the rate of convergence in this conjecture implies the size of the vanishing set is $\ll \frac{N^2}{(\log N)^{3/2}}$ which improves upon Theorem \ref{main thm nonvanishing} (with $X=2\log N$).
\end{remark}

\subsection{Idea of proof}\label{sec:ideaofproof} As mentioned above, our main idea is to transfer the distribution (in particular, the non-vanishing) of vertical periods to geodesic periods. To illustrate this, it is useful to consider where $f$ is a holomorphic cusp form of weight 2. Then $f(z) dz$ is a $\Gamma$-invariant harmonic 1-form. Hence, the following period, known as a \emph{modular symbol},
$$\inprod{\gamma}{f} : = \int_{w}^{\gamma w} f(z) dz$$
does not depend on the base point $w$ (which could be $\infty$). Moreover, the map $\inprod{\cdot}{f}: \Gamma \to \C$ is additive. This implies that $\inprod{\gamma}{f}$ is a well-defined invariant of both $[\gamma]_{\infty}$ and $\{ \gamma \}$, and so $\inprod{\gamma}{f}$ is constant along the connected components of the graphs $G_N$ defined above. So in this case there is a straightforward connection between vertical line periods and geodesic periods.

For automorphic forms of general weight, this is no longer true. We use instead a key ingredient from \cite{no23}, which allows us to find a relation between the vertical line period and the geodesic period corresponding to an edge in the bipartite graph $G_N$ defined above. We show that if $f$ is a cusp form of weight $k\in 2\Z$ and $\gamma \in\Gamma$ is hyperbolic with lower-left entry $c_{\gamma}>0$, then 
\begin{equation*}
    \mathcal{P}_f(\{ \gamma \}) = (-1)^{k/2+1} L_f(\gamma \infty) +O_{f,\varepsilon}\left(1+\left(\frac{c_\gamma}{|\tr (\gamma)|}\right)^{1/2+\varepsilon}\right),
\end{equation*}
and a similar statement holds for Eisenstein series, see Proposition \ref{prop:main}.

In particular, we obtain that for $100\%$ of edges, the vanishing of the geodesic period implies that the vertical period is very small. 
In the cases where the normal distribution of vertical periods is known, we have that almost all of the vertical periods are large (of size $\sqrt{\log N}$). This yields the wanted conclusion if we can bound from above the degrees in $G_N$ of vertices in $T_\Gamma(N)$ and from below the degrees of vertices in $\tilde{C}_\Gamma(N)$. 

One can easily obtain good estimates for the degrees of the vertices in $T_\Gamma(N)$ from the basic properties of the graph by counting matrices with fixed lower left entry and bounded trace, see Lemma \ref{lem:c_x}. Using a geometric argument, we show that the degree of a vertex in $\tilde{C}_\Gamma(N)$ is lower bounded by the length of the corresponding geodesics restricted to a subdomain of the fundamental domain for $\Gamma \backslash \H$: there exists a region $\mathcal{B} \subset \Gamma \backslash \H$ such that
\begin{equation}\label{eq:introdeg}\deg (\{ \gamma \}) \gg l (\CC \cap \mathcal{B}),\end{equation}
see Proposition \ref{prop:degy}. %Following an approach of Aka--Einsiedler \cite{akaeins} we combine effective mixing \cite{ve10} ({\color{red} maybe change}) with an equidistribution theorem of Zelditch \cite{zel92} to obtain an equidistribution theorem for sparse subcollections of closed geodesics, see Theorem \ref{thm:sparse}, thus obtaining lower bounds for the right-hand side of (\ref{eq:introdeg}) on average. 
By combining ergodicity of the geodesic flow with the equidistribution of closed geodesics of bounded length \cite[Cor.\ 6.5]{zel92}, we obtain  equidistribution for sparse subcollections of closed geodesics, see Theorem \ref{thm:sparse}, thus obtaining lower bounds for the right-hand side of (\ref{eq:introdeg}) on average. In the case of the modular group, we obtain equidistribution for even sparser subcollections by a result of Aka--Einsiedler \cite[Thm.\ 2]{akaeins}, see Theorem \ref{thm:AkaEin}. Combining these ingredients yields the proofs of Theorems \ref{main thm nonvanishing}, \ref{thm:main} and \ref{thm:Eis}. The proof of Theorem \ref{thm:CLTintro} follows by a general argument which allows one to ``lift'' the distribution of measures from one component to the other in a bipartite graph, as developed in Section \ref{sec:normal}.  %{\color{red}If we denote $\mu_N$ the counting probability measure on $C_{\Gamma}(\log N)$, this could be interpreted as $\mu_N(A) \ll \ \mu_{G_N}(A)$, if $A \subset C_{\Gamma}(\log N)$ is large enough. It would be interesting to see if one can obtain upper bounds for degrees of vertices in $C_{\Gamma}(\log N)$, and thus obtaining $\mu_N \asymp \mu_{G_N}$.}

%This implies that the degrees of vertices of $C'_\Gamma(N)$ on average are of the expected size which yields the wanted.

\begin{remark}
    We obtain stronger non-vanishing results for the modular group $\Gamma = \PSL_2 (\Z)$ and the reason is twofold. Firstly, in this case, from \cite{BD} and \cite{DrNo}, we know precise rate of convergence towards the normal distribution of the set $\{ L_f(\gamma \infty) : [\gamma]_{\infty} \in T_\Gamma(N) \}$, and hence we can deduce better upper bounds for the subset of cosets with small vertical line period. Secondly, in the arithmetic setting, it is possible to obtain equidistribution for a sparser subcollection of closed geodesics, as in \cite{akaeins}.  
\end{remark}
\begin{remark}
    We note that if a central limit theorem was known for vertical line periods of automorphic forms on a general $\Gamma$ with a cusp, our methods allow us to extend Theorem \ref{thm:main} to any such automorphic form. Similarly, one could extend Theorem \ref{main thm nonvanishing} to any arithmetic subgroup with a cusp. Such a central limit theorem has been announced by Bettin--Drappeau--Lee.  
\end{remark}
\begin{remark}
 It is natural to ask what is the more precise relationship  between the measures $\mu_{G_N}$ and the uniform distributions $\mu_N$ on $\tilde{C}_\Gamma(N)$. The argument sketched above gives the following: there exists a constant $c=c(\Gamma)>0$ such that for any $\varepsilon>0$  and any sequence of subsets $A_N\subset \tilde{C}_\Gamma(N) $  satisfying $\mu_N(A_N)\geq \varepsilon$ it holds that 
 $ \mu_{G_N}(A_N)\geq c \varepsilon^2 $ for $N$ large enough. It would be interesting to see if one can prove the stronger statement that there exists $0<c_1<c_2$ such that $c_1\varepsilon\leq \mu_{G_N}(A_N)\leq  c_2\varepsilon$ for $N$ large enough. This would require obtaining stronger upper and lower bounds on the degrees in the graphs $G_N$ of the vertices in $\tilde{C}_\Gamma(N)$.  
\end{remark}

\subsection{Structure of the paper}
In Section \ref{sec:MicVen} we briefly discuss the work of Michel-Venkatesh for the imaginary quadratic case and why their method falls short in the real quadratic case.\\
In Section \ref{sec:background} we introduce the required background material, including the connection between vertical line integral and geodesic periods from \cite{no23}.\\
In Section \ref{sec:graph specifics} we define the bipartite graph $G_N$ alluded above and prove the key bounds on the degrees in this graph. This includes a geometric argument relying on the equidistribution of sparse collections of closed geodedics as proved in Section \ref{sec:sparse}.\\
In Section \ref{sec:nonvanishgeo} we use the results in the preceding section to complete the proofs of our main Theorems \ref{main thm nonvanishing} and \ref{thm:main}.\\
In Section \ref{sec:normal} we prove a general result on how to ``lift'' the distribution from one component of a bipartite graph to the other. We use this to prove Theorem \ref{thm:CLTintro}.\\
In Section \ref{sec:application} we obtain the application towards non-vanishing of central values of Rankin-Selberg $L$-functions.

\section*{Acknowledgemnts}
The authors would like to thank Philippe Michel, Farrell Brumley, Morten Risager and Sary Drappeau for useful discussions and comments, as well as the referees for valuable suggestions which improved the quality of the paper significantly. This material is based upon work supported by the Swedish Research Council under grant no. 2021-06594 while the authors were in residence at Institut Mittag-Leffler in Djursholm, Sweden during the  Analytic Number Theory programme in the spring of 2024.

\section{Equidistirbution of Heegner points and non-vanishing of Rankin--Selberg $L$-functions, following Michel--Venkatesh}\label{sec:MicVen}
In an elegant paper \cite{MichelVenkatesh07} Michel--Venkatesh introduced a new method for evaluating the first moment of certain Rankin--Selberg $L$-functions associated to theta series of class group characters of imaginary quadratic fields. Their approach used equidistribution of Heegner points combined with Waldspurger's formula, the Plancherel formula and subconvexity estimates to obtain quantitative non-vanishing results. The methods has subsequently been extended to calculating more general ``wide moments'' of $L$-functions by the second-named author \cite{Nordentoft21}, see also \cite{BurungaleHida16}.
During the problem session at the Oberwolfach workshop \cite[p.\ 1377]{Oberwolfach20} Michel asked whether one could extend this to real quadratic field under some ``non-smallness'' assumption on the class number (see Question \ref{q:michel} above). The difficulty of this problems stems from the infinitude of the unit group, which makes the original approach in \cite{MichelVenkatesh07} fall short as we will now explain. %In this paper we introduce a spectral approach which resolves the problem (in  the context of holomorphic forms of weight $2$) for a positive proportion of fundamental discriminants when ordered by the size of fundamental unit. 

%We will now recall the approach of Michel--Venkatesh. We will also explain what the mehtod yields in the case of real quadratic  fields and why it falls short of answering Michel's question. 
We start by sketching the argument from \cite{MichelVenkatesh07} in the imaginary quadratic case. Let $f$ be an even Hecke--Maa{\ss} form of weight $0$ and level $1$. Let $K/\Q$ be an imaginary quadratic field  of discriminant $D<-6$ with class group $\Cl_K$ and class number $h(D):=|\Cl_K|$. Given $A\in \Cl_K$ we denote by $z_A\in \mathcal{X}_0(1):=\PSL_2(\Z)\backslash \H$ the associated Heegner point (for a definition see e.g. \cite[p.\ 75]{Duke88}). 
%$$z_A=\frac{-b+i\sqrt{|D|}}{2a}\in X_0(1):=\PSL_2(\Z)\backslash \H,$$
%where $A$ corresponds to the quadratic form $aX^2+bXY+cY^2$ under the classical bijection $\Cl_K\leftrightarrow \Gamma\backslash \mathcal{Q}_D$, where $\mathcal{Q}_D$ denote the set of primitive, integral binary quadratic forms of discriminant $D<0$. 
Let $\chi\in \widehat{\Cl_K}$ be a class group character of $K$ and denote by $\theta_\chi$ the associated theta-series via automorphic induction. Waldspurger's formula due to Zhang \cite{Zh01}, \cite{Zh04} in its explicit form gives: 
$$L(f\otimes\theta_\chi,1/2)=\frac{c^-_f}{|D|^{1/2}}\left| \sum_{A\in \Cl_K}\chi(A) f(z_A) \right|^2,$$
for some explicit constant $c^-_f>0$.
By Plancherel (i.e. character orthogonality) and Duke's equidistribution theorem for Heegner points \cite{Duke88} it follows that as $D\rightarrow -\infty$ we have
\begin{equation}\label{eq:immoment}\frac{1}{h(D)}\sum_{\chi\in \widehat{\Cl_K}} L(f\otimes\theta_\chi,1/2)=\frac{c_f^-}{|D|^{1/2}} \sum_{A\in \Cl_K} |f(z_A)|^2=\frac{c_f^- h(D)}{|D|^{1/2}}(|\!| f|\!|_{L^2}+o(1)).\end{equation}
%$$\sum_{\chi\in \widehat{\Cl_K}} L(f\otimes\theta_\chi,1/2)=\frac{c_f h(D)}{D^{1/2}} \sum_{A\in \Cl_K}\chi(A) |f(z_A)|^2$$
%By Duke's equidistrinution theorem for Heegner points we conclude that
In particular, the left-hand side is non-vanishing for $|D|$ sufficiently large. Furthermore, by applying the subconvexity bound for the Rankin--Selberg $L$-functions due to Harcos--Michel \cite{HarcosMichel06} one obtains a non-vanishing proportion of $\gg |D|^\delta$ for some (small) $\delta>0$. 

Now let $K$ be a real quadratic field of discriminant $D>0$ with narrow class group $\Cl_K^+$ and wide class group $\Cl_K$. Denote the narrow class number by $h^+(D):=|\Cl_K^+|$. In this setting we can associate to an ideal class $A\in \Cl_K^+$ a primitive oriented closed geodesics $\CC_A$ on $\mathcal{X}_0(1)$ analogues to the correspondence between Heegner points and ideal classes above. 
%$$ \Cl^+_K \hookrightarrow \{\text{primitive oriented closed geodesics on $X_0(1)$}\},\quad A\mapsto \mathcal{C}_A\subset X_0(1). $$
%A celebrated theorem of Duke \cite{Duke88} states that the closed geodesics
%$$\{\mathcal{C}_A\subset X: A\in \Cl^+_K\},$$
%equipped with the line element $\tfrac{|dz|}{\Im z}$, equidistribute with respect to the hyperbolic measure on $X$ as $D\rightarrow\infty$.
Let $f$ be Hecke--Maa{\ss} form of weight $0$ and level $1$ and assume for simplicity that $f$ is even. Similarly to the imaginary case, by employing  the formula (\ref{eq:popa}) due to Popa \cite[Thm.\ 1]{popa06} and orthogonality of characters we obtain: %$$L(f\otimes\theta_\chi,1/2)=\frac{c^+_f}{D^{1/2}}\left| \sum_{A\in \Cl_K}\chi(A) \int_{\mathcal{C}_A}f(z)\tfrac{|dz|}{\Im z} \right|^2.$$
%Applying Plancherel as above one obtains 
\begin{equation}\label{eq:realmoment}\frac{1}{h^+(D)}\sum_{\chi\in\widehat{\Cl_K^+}:\chi(J)=1}L(f\otimes\theta_\chi,1/2)=\frac{c^+_f}{D^{1/2}} \sum_{A\in \Cl^+_K} \left|\int_{\mathcal{C}_A}f(z)\tfrac{|dz|}{\Im z} \right|^2.\end{equation}
We see however that equidistribution of the geodesics does \emph{not} imply non-vanishing of the right-hand side due to the fact that the square is on the ``outside'' of the integral (over the closed geodesics). To fix this, one has to allow for non-trivial infinity type and consider the Arakelov class group of $K$:%popa (4.1.2)
$$ \Cl_K^\mathrm{Ara}:= K^\times \A_\Q^\times \backslash \A_K^\times/\widehat{\mathcal{O}}_K^\times\cong \Cl^+_K\times \R_{>0} /\epsilon_K^\Z,$$
where $\epsilon_K>1$ is the fundamental positive unit of $K$. Given a character $\chi\in \widehat{\Cl_K^\mathrm{Ara}}$, the infinity type is parameterized by a number $\lambda_\chi\in \frac{1}{\log \epsilon_K}\Z$. As explained in \cite[Sec. 4.2] {BlomerBrumley20} using equation (4.7) in \emph{loc.\ cit.} one has:
$$ \frac{1}{h^+(D)}\sum_{\chi\in\widehat{\Cl^\mathrm{Ara}_K}}L(f\otimes\theta_\chi,1/2)\psi_f(\lambda_\chi)= \frac{c^+_f}{D^{1/2}} \sum_{A\in \Cl_K^+} \int_{\mathcal{C}_A}|f(z)|^2\tfrac{|dz|}{\Im z},$$
where $\psi_f(\lambda_\chi)$ denotes a weight function satisfying, by \cite[(4.8)] {BlomerBrumley20}, the following bound:
$$\psi_f(\lambda_\chi)\ll e^{-c_0|\lambda_\chi|/|\lambda_f|},$$
for some absolute constant  $c_0> 0$.
Notice that the equidistribution theorem of Duke yields that indeed the left-hand side is non-zero for $D$ large enough. Thus we obtain non-vanishing Rankin-Selberg $L$-functions with $\chi$ an Arakelov class group character. Because of the rapid decay of $\psi_f(\lambda_\chi)$ and the convexity bound for the $L$-values, we can ensure that $|\lambda_\chi|\leq c_1\log D$ for some sufficiently large $c_1>0$. Notice that the family of such characters is of size $D^{1/2+o(1)}$, exactly as in the imaginary quadratic case. In other words, the question of Michel amounts to whether one can ensure that the  infinity type is trivial. 
\begin{remark}
    In \cite{MichelVenkatesh07}, similar results are obtained for holomorphic forms of weight $2$ using the Jacquet--Langlands correspondence and Waldspurger's formula for definite quaternion algebras due to Gross in this case. 
\end{remark}
\begin{remark}
Templier \cite{Templier11rank} extended the approach of Michel--Venkatesh to derivatives using the Gross--Zagier formula. Later Templier \cite{Templier11} and Templier--Tsimerman \cite{TemplierTsimerman13} evaluated the left-hand side of (\ref{eq:immoment}) using tools from analytic number theory (approximate functional equation and (non-split) shifted convolution sums). See also \cite{HuangLester23}. It would be interesting to see whether a variation of these analytic methods can be made to work in the real quadratic case. 
\end{remark}

%In this paper we introduce a spectral approach in the case of holomorphic forms of weight $2$ using the Selberg trace formula  which allows us to answer the question of Michel for a positive proportion of $K$ (ordered by their fundamental units). This uses ideas going back to Sarnak (thesis) and revisited by Petridis--Risager (Crelle paper). We also indicate how one could hope to approach the case of Maa{\ss} forms using related ideas (in a dynamical disguisse).
\section{Background}\label{sec:background}

\subsection{Fuchsian groups}
Let $\Gamma < \PSL_2(\R)$ be a Fuchsian group of finite covolume acting on hyperbolic $2$-space $\H:=\{z\in \C: \Im z>0\}$ via M\"{o}bius transformations. Consider the associated hyperbolic $2$-orbifold 
$$\mathcal{X}_{\Gamma}:= \Gamma \backslash \H,$$ 
equipped with the hyperbolic volume element $\tfrac{dxdy}{y^2}$ and line element $\tfrac{|dz|}{\Im z}$. It is a general fact that there is a one-to-one correspondence between primitive closed geodesics on $\mathcal{X}_{\Gamma}$ and primitive conjugacy classes of hyperbolic elements of $\Gamma$. We denote by $\mathcal{C}_{\gamma}$ the oriented closed geodesic on $\mathcal{X}_\Gamma$ corresponding to a hyperbolic conjugacy class $\{ \gamma \}\in \mathrm{Conj}(\Gamma)$.

%$$ \mathrm{hConj}(\Gamma)\leftrightarrow \{\text{primitive oriented closed geodesics on $X$}\},$$
%where $\mathrm{hConj}(\Gamma)$ denotes the primitive hyperbolic conjugacy classes of $\Gamma$, given by mapping a conjugacy class $\{\gamma\} $ to the projection to $X$ of the axis of $\gamma$ (i.e. the oriented fixed circle (in $\H$) of $\gamma$). 

Each hyperbolic $\gamma \in \Gamma$ is conjugate in $\PSL_2(\R)$ with some $\begin{psmallmatrix}
    t & 0 \\ 0 & t^{-1} \end{psmallmatrix}$ with $t>1$. The norm of $\gamma$ is $N(\gamma) = t^2$, the trace is $\tr(\gamma) = t+t^{-1}$ and the length of the corresponding geodesic is \begin{equation}\label{eq:elltrace}
        \ell (\CC_{\gamma}) := \int_{\CC_\gamma} 1 \tfrac{|dz|}{\Im z}= \log N(\gamma) = 2 \log t=2\log \tr(\gamma)+O((\tr(\gamma))^{-2}).\end{equation}

Given a collection $I$ of closed geodesics on $\mathcal{X}_\Gamma$, we denote the total length by
\begin{equation}
    \label{eq:collection} \ell(I):=\sum_{\CC\in I}\ell(\CC).
\end{equation}
The unit tangent bundle of $\mathcal{X}_\Gamma$ admits a natural description as a homogeneous space: 
$$\mathbf{T}^1 (\mathcal{X}_{\Gamma}) \simeq \Gamma \backslash \PSL_2(\R),$$
recalling that $\H \simeq \PSL_2(\R) / \mathrm{PSO}_2$, equipped with Haar probability measure $dg$ and the associated inner product $\langle \cdot, \cdot \rangle$. In particular, the unit tangent bundle $\mathbf{T}^1 (\mathcal{X}_{\Gamma})$ admits a right action of $\PSL_2(\R)$ given by $\Gamma x.g = \Gamma x g$. The diagonal subgroup 
$$A:=\{ a_t : t \in \R\} \leq \PSL_2(\R),\quad a_t := \begin{pmatrix}
    e^{t/2} & 0 \\ 0 & e^{-t/2}
\end{pmatrix},$$
generates the geodesic flow on $\mathbf{T}^1 (\mathcal{X}_{\Gamma})$. Let $\CC \subset \mathcal{X}_{\Gamma}$ be an oriented closed geodesic and consider its lift to the unit tangent bundle. By a slight abuse of notation, we denote the lift by the same symbol $\CC \subset \mathbf{T}^1 (\mathcal{X}_{\Gamma})$. This yields a one-to-one correspondence between oriented closed geodesic on $\mathcal{X}_\Gamma$ and  closed and compact $A$-orbits in $\mathbf{T}^1 (\mathcal{X}_\Gamma)$. We denote by $\mu_{\CC}$ the unique $A$-invariant measure on $\CC$ with volume $\ell(\CC)$. For $F:\Gamma \backslash \PSL_2(\R) \to \C$ integrable with respect to $\mu_{\CC}$, we have explicitly  
\begin{align*}
    \int_{\CC} F(g) d \mu_{\CC}(g) = \int_0^{\ell(\CC)} F( xa_t) dt, \quad \text{for any } x \in \CC.
\end{align*}
We refer to \cite[Chapter 9]{EW} for more details about the geodesic flow on quotient surfaces.

\subsubsection{Automorphic forms} We will now recall some standard facts about the spectral theory of automorphic forms, we refer to \cite[Sec. 4]{DuFrIw02} for further background.  Let $f: \H \to \C$ be an \emph{automorphic function of weight $k$ for $\Gamma$}, that is a smooth function such that
\begin{equation}
\label{eq:automorphy}f(\gamma z) = j_{\gamma}(z)^k f(z), \quad \text{for all }\gamma \in \Gamma,
\end{equation}
where $j_{\gamma}(z)=\frac{cz+d}{|cz+d|}$ with $\gamma= \begin{pmatrix}
    a & b \\ c& d
\end{pmatrix}$. We equip the space of weight $k$ automorphic functions with the \emph{Petersson inner product}
$$ \langle f,g\rangle= \int_{\mathcal{X}_\Gamma} f(z)\overline{g(z)} \frac{dxdy}{y^2}.$$
If $f$ is square integrable with respect to the Petersson inner product and is eigenfunction of the weight $k$ Laplacian
$$\Delta_k := - y^2 \lr{\pdv[2]{}{x} + \pdv[2]{}{y}} + ik y \pdv{}{x},$$
then we say that $f$ is a \emph{Maa{\ss} form of weight $k$}. If $f$ is rapidly decaying at the cusps of $\Gamma$, we say that $f$ is a \emph{Maa{\ss} cusp form of weight $k$}, which implies that $f$ is square integrable with respect to the Petersson inner product. If a Maa{\ss} form of weight $k$ is not cuspidal, we call it a \emph{residual Maa{\ss} form of weight $k$}. 
%We will refer to Maa{\ss} forms of weight $0$ simply as \emph{Maa{\ss} forms}. 
Note that if $g\in \mathcal{S}_k(\Gamma)$ is an even weight $k$ holomorphic cusp form for $\Gamma$, then $z\mapsto y^{k/2}g(z)$ defines a Maa{\ss} cusp form of weight $k$ with eigenvalue $\tfrac{k}{2}(1-\tfrac{k}{2})$. We call these automorphic forms  \emph{associated to holomorphic cusp forms}. %If $\Gamma$ is normalized by the matrix  \todo{what to you mean by this} $\begin{psmallmatrix}
    %-1&0\\0&1
%\end{psmallmatrix}$, then the involution $\iota:z\mapsto -\overline{z}$ induces an action on automorphic forms of weight $0$ for $\Gamma$ and $\iota$ commutes with the Laplacian, see e.g. \cite[eq.\ (4.65)]{DuFrIw02}. 
We say that a Maa{\ss} form is of \emph{level $M$} if it is an automorphic form for the Hecke congruence group $\Gamma_0(M):=\{\gamma\in \PSL_2(\Z):\gamma\equiv\begin{psmallmatrix}
    \ast & \ast\\0& \ast
\end{psmallmatrix}\modulo M\}$. For a Maa{\ss} form $f$ of weight $0$ and level $M$ which is an eigenfunction for $z\mapsto -\overline{z}$, we refer to the eigenvalue $\epsilon(f)\in \{\pm 1\}$ as the \emph{sign of $f$} and say that it is \emph{even}, resp.\ \emph{odd}, see e.g. \cite[eq.\ (4.65)]{DuFrIw02}.%If $f$ is furthermore an eigenfunction for the Hecke operators normalized so that the first Fourier coefficient is $1$ then $f$ takes  \cite[Sec.\ 6]{DuFrIw02} 

We will also consider certain non-square integrable automorphic forms, namely the \emph{Eisenstein series}. We refer to \cite[Sec.\ 3.2]{Iw} for details. Let $\mathfrak{a}$ be a cusp of $\Gamma$ with stabilizer $\Gamma_\mathfrak{a}\subset \Gamma$ and scaling matrix $\sigma_\mathfrak{a}$. For $k\in 2\Z$ we define:
\begin{equation}\label{eq:Eis}
    E_{\mathfrak{a},k}(z,s):=\sum_{\gamma\in \Gamma_\mathfrak{a}\backslash\Gamma} j_\gamma(z)^{-k}\Im (\sigma_\mathfrak{a}^{-1} \gamma z)^s,\quad \Re s>1
\end{equation}
and elsewhere by meromorphic continuation. Note that the unitary Eisenstein series with spectral parameter $\Re s=1/2$ are exactly the ones showing up in the spectral theorem (see \cite[Ch.\ 7]{Iw}), which we will simply refer to as \emph{Eisenstein series of weight $k$}.  In the case $\Gamma=\PSL_2(\Z)$, we further define the \emph{completed Eisenstein series of weight $k$}:
%$$E^\ast_{k}(z,s):=\pi^{-s}\zeta(2s)\Gamma(s+k/2)\sum_{\gamma\in \Gamma_\infty\backslash \Gamma} j_{ \gamma }(z)^{-k} \Im( \gamma z)^{s},$$
\begin{equation}
    \label{eq:completedEis}
E^\ast_{k}(z,s):=\pi^{-s}\zeta(2s)\Gamma(s+k/2)E_{\infty,k}(z,s),\end{equation}
which we note is non-zero at the central point $s=1/2$ by \cite[Sec.\ 6.4]{Iw}. %and elsewhere by meromorphic continuation.   We will be interested in the (completed) Eisenstein series with spectral parameter $\Re s=1/2$ (which is exactly what shows up in the spectral theorem) and simply refer to an \emph{Eisenstein series of weight $k$} as an automorphic form of the type  $E^\ast_{k}(z,\tfrac{1}{2}+it)$ where $t\in \R $. 

%The weight $0$ Laplacian commutes with the involution $z\mapsto -\overline{z}$ and so we can diagonalize the space of Maa{\ss} cusp forms with respect to this involution. It can be seen from the Fourier  

Let $f$ be an automorphic form for $\Gamma$ of weight $k\in 2\Z$. Denote by $F$ the lift of $f$ to to $\mathbf{T}^1(\mathcal{X}_\Gamma)$, given by
$$F(g) := j_{g}(i)^{-k} f(g.i).$$
We define the \emph{geodesic period associated to $f$ and $\CC$} as
\begin{equation}\label{eq:geodesicperiods}\mathcal{P}_{f}(\CC) : = \int_{\CC} F(g) d \mu_{\CC}(g).\end{equation}
If the oriented closed geodesic $\CC$ corresponds to the hyperbolic conjugacy class $\{\gamma\}\in \mathrm{Conj}(\Gamma)$, we may write $\mathcal{P}_{f}(\{\gamma\}) = \mathcal{P}_{f}(\CC)$.

%\begin{remark}
%    Note that if $f$ is a classical holomorphic form of weight $k$, then $y^{k/2} f$ is a Maa{\ss} form of weight $k$ and the geodesic period is given by 
%    $$\int_{\CC} f(z) Q(z,1)^{k-1} ds(z),$$
    
%\end{remark}

\subsection{From closed geodesic to vertical periods}
We fix $\Gamma\subset \PSL_2(\R)$ to be a Fuchsian group of finite covolume with a cusp at infinity of width one. We are now ready to state the key connection between vertical line integrals and geodesic periods. This was proved in \cite[Prop.\ 4.2]{no23} in a slightly different form. We will simply indicate the necessary changes.
\begin{proposition}[Cf.\ Proposition 4.2 in \cite{no23}]
\label{prop:main}
Let $k\in 2\Z$ be an even integer. Let $f:  \H\rightarrow \C$ be a Maa{\ss} cusp form of weight $k$ for $\Gamma$. Let $\gamma\in \Gamma$ be a hyperbolic matrix with lower left entry $c_\gamma>0$, and let $\CC_\gamma\subset \Gamma\backslash \PSL_2(\R)$ be  the oriented closed geodesic corresponding to the $\Gamma$-conjugacy class of $\gamma$. For any $\varepsilon>0$, we have that
\begin{align}\label{eq:cuspmain}  \mathcal{P}_f(\CC_\gamma)= (-1)^{k/2+1}\int_0^\infty f\left(\gamma \infty+iy\right)\frac{dy}{y}+O_{f,\varepsilon}\left(1+\left(\frac{c_\gamma}{|\tr (\gamma)|}\right)^{1/2+\varepsilon}\right).\end{align}

%Similarly, if $\ell m^2|N$ and $E_{\eta, k,it}$ denotes the weight $k$ Eisenstein series defined from $\eta\modulo m$ then we have
Similarly, for $f$ either an Eisenstein series or a residual Maa{\ss} form of weight $k$ with Laplace eigenvalue $1/4+t^2$, we have 
\begin{align}\label{eq:eisensteinmain}  \mathcal{P}_f(\CC_\gamma)=&(-1)^{k/2+1}\int_0^\infty \left(f\left(\gamma \infty+iy\right)-f_\infty\left(y\right)\right)\frac{dy}{y}  \\ \nonumber
&+A_f c_\gamma^{-1/2-it}\frac{1+(-1)^{k/2}}{1/2+it}+ B_f c_\gamma^{-1/2+it}\frac{1+(-1)^{k/2}}{1/2-it}\\
\nonumber &+O_{f,\varepsilon}\left(\left(\frac{c_\gamma}{|\tr (\gamma)|}\right)^{1/2+\varepsilon}+\left(\frac{c_\gamma}{|\tr (\gamma)|}\right)^{-1/2-\varepsilon}\right), \end{align}
where $f_\infty(y)=A_fy^{1/2-it}+B_fy^{1/2+it}$ denotes the constant Fourier coefficient at $\infty$.
%If the automorphic forms $F$ (resp. $E$) are Hecke-normalized then the implied constants depend polynomially on the spectral parameter $t_F$ (resp. $t_E$). 
\end{proposition}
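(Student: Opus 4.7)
\emph{Proof plan.} We follow the proof of \cite[Prop.~4.2]{no23} with the minor modifications needed for the present formulation. The decisive geometric input is that, for a hyperbolic $\gamma\in\Gamma$ with $c_\gamma>0$, a direct computation of the fixed points $\alpha,\beta=((a-d)\pm\sqrt{\tr(\gamma)^2-4})/(2c_\gamma)$ shows
\[\alpha-\gamma\infty=\frac{\sqrt{\tr(\gamma)^2-4}-\tr(\gamma)}{2c_\gamma}=O\!\left(\frac{1}{c_\gamma|\tr(\gamma)|}\right),\]
so the attracting endpoint $\alpha$ of the axis of $\gamma$ lies extremely close to the cusp image $\gamma\infty$. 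Consequently the portion of the axis approaching $\alpha$ hugs the vertical line through $\gamma\infty$, which is what permits a comparison with the vertical line integral.

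The first step is to parametrize the closed geodesic explicitly. Take $g_0\in\PSL_2(\R)$ with $g_0^{-1}\gamma g_0=a_{\ell(\CC)}$, $g_0(\infty)=\alpha$, $g_0(0)=\beta$; up to rescaling one has $g_0(z)=(\alpha z+\beta)/(z+1)$, so that $g_0(iy)=\frac{\alpha y^2+\beta}{1+y^2}+iy\,\frac{\alpha-\beta}{1+y^2}$ and $j_{g_0}(iy)=(1+iy)/\sqrt{1+y^2}$. Placing the base point at the top of the semicircle and writing $y=e^t$, a single period $t\in[0,\ell(\CC)]$ corresponds to $y\in[1,e^{\ell(\CC)}]$ entirely on the ``$\alpha$-side,'' and
\[\mathcal{P}_f(\CC)=\int_1^{e^{\ell(\CC)}}j_{g_0}(iy)^{-k}\,f(g_0(iy))\,\frac{dy}{y}.\]

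The next step is to change variables to $Y=\Im g_0(iy)=y(\alpha-\beta)/(1+y^2)$, a decreasing bijection $[1,\infty)\to(0,(\alpha-\beta)/2]$. On this range, $g_0(iy)=\gamma\infty+iY+E$ with $|E|\ll 1/(c_\gamma|\tr(\gamma)|)+c_\gamma Y^2/|\tr(\gamma)|$, while $j_{g_0}(iy)^{-k}=i^{-k}+O(1/y)$. A first-order Taylor expansion of $f$ around $\gamma\infty+iY$, combined with Sobolev-type $L^\infty$ bounds on $f$ and its derivatives, shows that the integrand agrees with $(-1)^{k/2+1}f(\gamma\infty+iY)\,dY/Y$ up to an admissible error (the sign being determined by $i^{-k}$ together with the orientation reversal in the substitution). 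Summing over the period and extending the $Y$-integral to $[0,\infty)$, with the missing range absorbed by the cuspidal decay of $f$ at $\infty$, yields the main term $(-1)^{k/2+1}\int_0^\infty f(\gamma\infty+iy)\,dy/y$. The main technical point is to balance the two terms in $|E|$ uniformly across the transitional regime $Y\sim(c_\gamma/|\tr(\gamma)|)^{1/2}$, where neither the smallness of the axis-to-vertical displacement nor the cuspidal decay of $f$ individually suffices; this balance is the technical content of \cite[Prop.~4.2]{no23} and produces the stated error $O(1+(c_\gamma/|\tr(\gamma)|)^{1/2+\varepsilon})$.

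For an Eisenstein or residual $f$, one decomposes $f=(f-f_\infty)+f_\infty$ and applies the cuspidal argument above to $f-f_\infty$. The contribution of $f_\infty(y)=A_fy^{1/2-it}+B_fy^{1/2+it}$ along the axis is then computed explicitly via the same substitution $Y=y(\alpha-\beta)/(1+y^2)$; the resulting beta-type integrals produce the terms $A_fc_\gamma^{-1/2-it}/(1/2+it)$ and $B_fc_\gamma^{-1/2+it}/(1/2-it)$, with the prefactor $(1+(-1)^{k/2})$ reflecting how the $j_{g_0}^{-k}$-twist combines the contributions from the two halves of the axis. The non-decay of $f_\infty$ at the cusp introduces the additional $(c_\gamma/|\tr(\gamma)|)^{-1/2-\varepsilon}$ error in \eqref{eq:eisensteinmain}, coming from the region of the axis far from the vertical line. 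The principal obstacle throughout is controlling the error uniformly across the transitional regime described above.
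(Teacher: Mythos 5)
Your proposal reconstructs a direct geometric argument, but the paper's own (sketched) proof indicates a somewhat different route: the reference \cite{no23} first expresses the geodesic period in terms of the \emph{additive twist $L$-series} $L(f,\gamma\infty,s)$ by Fourier-expanding $f$ at the cusp and integrating term by term over the axis, and then the present Proposition is obtained by rewriting those $L$-series as vertical line integrals (via Mellin inversion, i.e.\ ``the change of variables $z\mapsto\gamma z$ in the line integral''). Your direct comparison of the axis with the vertical line through $\gamma\infty$ is geometrically equivalent in spirit, and the key observation that the attracting endpoint $\alpha$ lies at distance $O\bigl(1/(c_\gamma|\tr\gamma|)\bigr)$ from $\gamma\infty$ is indeed the heart of the matter; however, the Fourier-analytic route in \cite{no23} is what actually yields the uniform error terms claimed in \eqref{eq:cuspmain} and \eqref{eq:eisensteinmain}, via explicit special-function asymptotics rather than a pointwise Taylor expansion. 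Your write-up asserts the error bound but does not derive it, which puts it at roughly the same level of detail as the paper's sketch.

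There is, however, an internal inconsistency you should resolve. Early on you state that, with base point at the top of the semicircle, a single period $t\in[0,\ell(\CC)]$ corresponds to $y\in[1,e^{\ell(\CC)}]$ ``entirely on the $\alpha$-side.'' Later, when explaining the factor $1+(-1)^{k/2}$ in the Eisenstein formula, you attribute it to ``combining the contributions from the two halves of the axis.'' These two statements cannot both describe your parametrization: if the arc lies entirely on the $\alpha$-side, then there is no second half of the axis to combine, and the symmetry producing $1+(-1)^{k/2}$ must be found elsewhere (it in fact arises from the behaviour of the $j_{g_0}(iy)^{-k}$ factor under $y\mapsto y^{-1}$, interacting with the beta-type integral of $f_\infty$, not from a literal second half of the geodesic arc). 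Relatedly, your sign argument (``$i^{-k}$ together with the orientation reversal in the substitution'') is not quite right as stated: the Jacobian $\tfrac{dy}{y}\approx-\tfrac{dY}{Y}$ and the reversal of integration limits under the decreasing change of variables $Y=y(\alpha-\beta)/(1+y^2)$ introduce two sign flips that cancel, so the net sign $(-1)^{k/2+1}$ must be traced to the orientation convention for $\CC$ (or to the choice of which fixed point is attracting), not to the substitution.
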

\begin{proof}[Proof sketch] The formula in \cite[Prop 4.2]{no23} gives a relation in the case $\Gamma=\Gamma_0(N)$ between the geodesic periods of an automorphic form $f$ and additive twist $L$-series of $f$, which are certain linear combinations of vertical line integrals of $f$. On the one hand, we need to argue that the formulas (\ref{eq:cuspmain}) and (\ref{eq:eisensteinmain}) correspond exactly to the ones in \cite[Prop.\ 4.2]{no23} by rewriting the additive twists as line integrals. Secondly, we want a similar formula for general $\Gamma$. To obtain this, notice that the arguments in \cite[Sec. 4]{no23} carry over to general Fuchsian groups of finite covolume with a cusp at infinity of width one. The only difference here is that the dependence on the spectral data of $f$ might change, see Remark 4.1 in \emph{loc.\ cit.}. Now the claimed result follows by inserting equations (4.31) and (4.32) of \emph{loc.\ cit.}\ into equation (4.27) of \emph{loc.\ cit.}\ and doing the change of variables $z\mapsto \gamma z$ in the line integral in (4.32).
\end{proof} 

\subsection{Normal distribution of vertical periods}
A key input in our proofs is the normal distribution of vertical periods of automorphic forms, as explored by many authors (see below). The starting point for these results were the conjectures of Mazur--Rubin--Stein \cite{MazurRubin21} motivated by understanding the distribution of central values of the twisted $L$-functions $L(E,\chi,s)=\sum_{n\geq 1}a_E(n) \chi(n) n^{-s}$, where $E/\Q$ is an elliptic curve with Dirichlet series coefficients $a_E(n)$, and $\chi$ is a Dirichlet character modulo $q$. These twisted $L$-values can be expressed in terms of \emph{modular symbols}:
\begin{equation}
    \langle \tfrac{a}{q}\rangle_E:=2\pi i\int_{a/q}^\infty f_E(z)dz,\quad a\in (\Z/q)^\times,
\end{equation}
where $f_E$ denotes the holomorphic cusp form of weight $2$ associated to $E$ by modularity.
The \emph{Birch--Stevens formula} then gives the following formula for the central value:
\begin{equation}
   \tau(\overline{\chi})L(E,\chi,1) =\sum_{a\in (\Z/q)^\times} \overline{\chi}(a)\langle \tfrac{a}{q}\rangle_E ,
\end{equation}
for $\chi$ a primitive Dirichlet character modulo $q$, where $\tau(\overline{\chi})$ denotes the Gau{\ss} sum. Based on numerical experiments, Mazur--Rubin--Stein conjectured that 
$$\{\Re (\langle \tfrac{a}{q}\rangle_E)/\sqrt{\log q}: a\in (\Z/q)^\times\},$$ is asymptotically normally distributed as $q\rightarrow \infty$ \cite[Conj.\ 4.3]{MazurRubin21}. This conjecture was settled on average over $q$ by Petridis--Risager \cite{peri18}. 

Now let $f$ be an arbitrary automorphic form with respect to a Fuchsian group $\Gamma$ of finite covolume with a cusp at $\infty$.  Then the  (averaged version of the) conjecture of Mazur--Rubin--Stein admits a natural generalization in terms of the vertical periods: 
\begin{align} \label{eq:verticalperiods}
\int_0^\infty (f(\gamma \infty+iy)-f_\infty(y))\frac{dy}{y},\quad \gamma \in \Gamma 
\end{align}
where $f_\infty(y)$ denotes the constant term of $f$ at the cusp $\infty$. Note that $f_{\infty}(y)=0$ if $f$ is a cusp form. We note that in the case where $\Gamma$ is a congruence subgroup and $f$ is a Hecke eigenform, then the vertical periods (\ref{eq:verticalperiods}) also satisfy a Birch--Stevens type formula, see \cite[Sec.\ 8.1]{DrNo}. 

The distribution of the more general vertical periods (\ref{eq:verticalperiods}) have been studied by many authors using diverse tools \cite{BD}, \cite{DrNo}, \cite{peri18}, \cite{Co}, \cite{No21}, \cite{LeeSun}. In all known cases, the limiting distribution is that of a Gau{\ss}ian. Note that these results are all phrased in terms of \emph{additive twist $L$-series} \cite[Eq.\ (5.1)-(5.2)]{DrNo}. So in order to apply the results in \emph{loc.\ cits.} and pin down the variance as in (\ref{eq:variance}), we need to rewrite the vertical integrals (\ref{eq:verticalperiods}) in terms of these $L$-series. If $f$ is associated to a holomorphic cusp form of even weight $k\geq 2$, we have 
\begin{equation}\label{eq:holomorphicperiods}
    \int_0^\infty f(\gamma \infty+iy)\frac{dy}{y}=2^{k/2}\Gamma(\tfrac{k}{2}) L(f,\gamma \infty,1/2),
\end{equation}
where 
$$L(f,x, s):=\sum_{n\geq 1}a_f(n)e(nx)n^{1/2-s},$$ for $\Re(s)>1$ and $x \in \R$, and it admits analytic continuation to all of $\C$ when $x$ is $\Gamma$-equivalent to a cusp, see \cite[Lemma 5.3]{DrNo}. Here $a_f(n)$ denote the Fourier coefficients of $f$ normalized as in \cite[Eq.\ (2.4)]{DrNo}.

If $f$ is a Hecke--Maa{\ss} cusp form of level $1$, weight $0$ and sign $\epsilon(f)=\pm 1$ or a completed Eisenstein series of level $1$ and weight $0$ (in which case $\epsilon(f)=1$), then%normalized so that the first Fouirer coefficient is $1$ then $f$ takes either real or purely imaginary values according to whether $\epsilon(f)=\pm 1$. Thus the vertical periods are real or purely imaginary and equals 
\begin{equation}\label{eq:verticalperiodtoadditive}
    \int_0^\infty (f(\gamma\infty+iy)-f_\infty(y))\frac{dy}{y}=  \frac{2\pi^{3/2}}{\cosh(\pi t_f)\Gamma(\frac{3}{4}+\frac{1}{2}it_f)\Gamma(\frac{3}{4}-\frac{1}{2}it_f)}L^\pm(f,\gamma \infty, 1/2), 
\end{equation}
where 
$$L^\pm(f,x, s):=\sum_{n\geq 1}a_f(n)n^{1/2-s}\begin{cases}
    \cos (nx),& \pm=+,\\
    i\sin (nx),& \pm=-
    ,
\end{cases}$$
for $x\in \Q$, $\Re s>1$ and elsewhere by analytic continuation,
see \cite[Lemma 5.3]{DrNo}. 

The central limit theorem for additive twist $L$-series in the case of holomorphic forms with effective rate of convergence has been obtained by, in certain cases respectively, by the first named author \cite[Thm.\ 1.4]{Co} using perturbation theory, and by Bettin and Drappeau  \cite[Cor.\ 1.5]{BD} using dynamics of the Gau{\ss} map.  Combing this with the formula (\ref{eq:holomorphicperiods}) we arrive at the following result for the vertical periods. Recall here the definition (\ref{eq:TgammaX}) of the set $T_\Gamma(N)$ of double cosets with lower left entry bounded by $N$.
\begin{theorem}[C., Bettin--Drappeau]\label{thm:normaleffective1}
    Let $f$ be associated to an even weight holomorphic cusp form for $\Gamma=\PSL_2(\Z)$, or to a weight 2 holomorphic form for a finite covolume Fuchsian group $\Gamma$ with a cusp at $\infty$ of width one.  Then for any rectangle $\mathcal{R}\subset \C$, 
    \begin{align*}  &\frac{1}{|T_\Gamma(N)|}\left|\left\{ [\gamma]_\infty \in T_\Gamma(N) : \frac{\int_{0}^\infty f(\gamma \infty +iy)\frac{dy}{y} }{c_f\sqrt{\log N}} \in \mathcal{R}\right\}\right|  \\
    \nonumber &= \frac{1}{2 \pi} \int_{x+iy\in \mathcal{R}} e^{-\frac{x^2+y^2}{2}}dxdy + O_f \lr{\frac{1}{\sqrt{\log N}}}.\end{align*}
 The implied constant may depend on $f$ but is independent of $\mathcal{R}$.
\end{theorem}

For holomorphic forms of general weight $k$ with respect to a general Fuchsian group of finite covolume with a cusp, the normal distribution has been obtained by the second named author \cite[Thm.\ 5.1]{No21} using techniques from spectral theory. In this setting the rate of convergence is not known.

\begin{theorem}[N.]\label{thm:normalnoneffective}
    Let $f$ be associated to a holomorphic cusp form for a Fuchsian group $\Gamma$ of finite covolume with a cusp at infinity of width one. Then for any rectangle $\mathcal{R}\subset \C$,
   \begin{align*}  \lim_{N \to \infty} \frac{1}{|T_{\Gamma}(N)|}\left|\left\{ [\gamma]_\infty \in T_\Gamma(N) : \frac{\int_{0}^\infty f(\gamma \infty +iy)\frac{dy}{y} }{c_f\sqrt{\log N}} \in \mathcal{R}\right\}\right| 
   = \frac{1}{2 \pi} \int_{x+iy\in \mathcal{R}} e^{-\frac{x^2+y^2}{2}} dxdy.\end{align*}
\end{theorem}

Drappeau and the second named author \cite[Thm.\ 1.5]{DrNo} extended the result of Bettin--Drappeau to general Maa{\ss} forms of level $1$ (we notice that the effective rate of convergence is not explicitly stated in \emph{loc.\ cit.} but follows as in \cite[p.\ 1412]{BD} using \cite[
Prop.\ 7.2]{DrNo} and the Berry--Esseen inequality). By (\ref{eq:verticalperiodtoadditive}) and \cite[Cor.\ 7.7]{DrNo} (recall here from \cite[Eq.\ (5.101)]{IwKo} that $L(\sym^2 f,1)=\langle f,f\rangle \cosh (\pi t_f)/(2\pi^2/3)$) we get a one dimensional normal distribution result.  
\begin{theorem}[Drappeau--N.]\label{thm:normaleffective2}
    Let $f$ be a Hecke--Maa{\ss} cusp form of weight $0$ for $\Gamma=\PSL_2(\Z)$ normalized so that it takes real values.  Then for any real numbers $a<b$, 
    \begin{align*}  &\frac{1}{|T_\Gamma(N)|}\left|\left\{ [\gamma]_\infty \in T_\Gamma(N) : \frac{\int_{0}^\infty f(\gamma \infty +iy)\frac{dy}{y} }{c_f\sqrt{\log N}} \in [a,b]\right\}\right|  \\
    \nonumber &= \frac{1}{\sqrt{2 \pi}} \int_a^b e^{-\frac{x^2}{2}}dx + O_f \lr{\frac{1}{\sqrt{\log N}}}.\end{align*}
 The implied constant may depend on $f$ but is independent of $a,b$.
\end{theorem}

The normal distribution of vertical periods of the completed Eisenstein  series for $\Gamma=\PSL_2(\Z)$ at $s=1/2$ was achieved by Bettin--Drappeau \cite[Thm.\ 2.1]{BD}. In this case the variance is given by $(c_{E^\ast})^2:=\sqrt{\pi}\Gamma(\tfrac{3}{4})^{-2}$ (again translating between the Estermann function and the vertical period using (\ref{eq:verticalperiodtoadditive})).
\begin{theorem}[Bettin--Drappeau]\label{thm:normalnoneffective2}
    Let $E^\ast$ denote the completed Eisenstein series of weight $0$ for $\Gamma=\PSL_2(\Z)$ and spectral parameter $s=1/2$ as in (\ref{eq:completedEis}). Let $\varepsilon>0$. Then for any real numbers $a<b$,
   \begin{align*}  &\frac{1}{|T_\Gamma(N)|}\left|\left\{ [\gamma]_\infty \in T_\Gamma(N) : \frac{\int_{0}^\infty (E^\ast(\gamma \infty +iy)-E^\ast_\infty(y))\frac{dy}{y} }{c_{E^\ast}\sqrt{\log N}} \in [a,b]\right\}\right|  \\ 
   &= \frac{1}{\sqrt{2 \pi}} \int_{a}^b e^{-\frac{x^2}{2}} dx+O_\varepsilon\left(\frac{1}{(\log\log N)^{1-\epsilon}}\right).\end{align*}
 The implied constant may depend on both $\varepsilon$ but is independent of $a,b$.
\end{theorem}
%Notice that in \emph{loc. cit.} they study the Estermann function which is a linear combination of vertical periods of Eisenstein series. For the general case combine \cite[Thm.\ 3.1]{DrNo}.

%We note that for completed Eisenstein series the optimal rate of convergence is known from \cite{BD} and is of size $\tfrac{1}{(\log\log N)^{1-\epsilon}}$ for any $\epsilon>0$. 

\section{Bipartite graphs on double cosets and conjugacy classes}
\label{sec:graph specifics}
In this section we prove the necessary results required for the proofs of our main Theorems \ref{main thm nonvanishing} \ref{thm:main} and  \ref{thm:Eis}. We start by defining the bipartite graph $G_N$, key to our approach alluded to in the introduction. This graph has as its two components double cosets with lower left entry bounded by $N$ and conjugacy classes with trace between $N/2$ and $N$, respectively, with an edge if the conjugacy class and double coset intersects. The key properties of this graph are bounds on the degrees of the vertices. For one component this is an elementary matrix count and for the other we  achieve it by a geometric argument relying on equidistribution of sparse collections of closed geodesics. 

\subsection{Bipartite graphs}Let $X$ and $Y$ be finite sets. We define a \emph{bipartite graph on $X,Y$} as a subset of the product
$$G\subset X\times Y.$$
We define \emph{the neighbors of $x\in X$ (resp.\ $y\in Y$)} as 
$$e(x):=\{y\in Y: (x,y)\in G\}\subset Y,\quad e(y):=\{x\in X: (x,y)\in G\}\subset X,$$
and for a subset $A\subset X$ (resp.\ $B\subset Y$) we denote
$$ e(A)=\bigcup_{x\in A}e(x)\subset  Y,\quad (\text{resp. }\, e(B)=\bigcup_{y\in B}e(y)\subset X). $$
We define \emph{the degree of $x$ (resp.\ $y$)} as  
$$\deg(x):=\#e(x),\quad (\text{resp. }\deg(y):=\# e(y)). $$

For a set $B \subset Y$ (resp.\ $A \subset X$), we denote
$$e^{-1}(B) : = \{ x \in X : e(x) \subset B \}, \quad (\text{resp. }\, e^{-1}(A) : = \{ y \in Y : e(y) \subset A \}).$$
Note that by definition $e^{-1}(B) \subseteq e(B)$.

\subsection{Double cosets and conjugacy classes}\label{sec:doublconj}
Let $\Gamma\subset \PSL_2(\R)$ be a Fuchsian group of finite covolume with a cusp at infinity of width one. Let 
$$\mathrm{Conj}(\Gamma):=\{\{\gamma\}: \gamma\in \Gamma\},\quad  \{\gamma\}:=\{\sigma \gamma \sigma^{-1}: \sigma\in \Gamma\}\subset \Gamma,$$
be the set of (not necessarily primitive) conjugacy classes of $\Gamma$. Denote by $\Gamma_\infty=\{\begin{psmallmatrix}1& \Z\\0 & 1    
\end{psmallmatrix}\}$ the stabilizer of the cusp $\infty$. Let 
 $$\Gamma_\infty \backslash \Gamma/ \Gamma_\infty:=\{[\gamma]_{\infty}: \gamma\in \Gamma \},\quad [\gamma]_{\infty}:=\Gamma_\infty \gamma \Gamma_\infty=\{\sigma_1\gamma \sigma_2: \sigma_1,\sigma_2\in \Gamma_\infty\} \subset \Gamma,$$
  be the set of double cosets with respect to the pair of cusps $(\infty,\infty)$. We will consider a bipartite graph on the following sets:
\begin{align}
\label{eq:X_N}X_N&:=\{[\gamma]_{\infty} \in \Gamma_\infty \backslash \Gamma/ \Gamma_\infty: 0< |c_\gamma|\leq N\},\\ 
\label{eq:Y_N}Y_N&:=\{\{\gamma\}\in \mathrm{Conj}(\Gamma): N/2\leq |\tr(\gamma)|\leq N\}, \end{align}
for $N\geq 1$. Note that in the notation in eq.\ (\ref{eq:TgammaX}) of the introduction we have $X_N=T_\Gamma(N)$. By respectively, \cite[Eq. (3.6)]{peri18} and the prime geodesic theorem \cite{hejhal2} we have
\begin{equation}\label{eq:sizeXY}
    |X_N|\sim C_1 N^2, \quad |Y_N|\sim C_2 \tfrac{N^2}{\log N},\quad N\rightarrow \infty\end{equation}
for certain constants $C_1,C_2>0$ depending on $\Gamma$. We denote $Y_N^* \subset Y_N$ the subset of {\it{primitive}} conjugacy classes. Note that  $Y^\ast_N=\tilde{C}_\Gamma(N)$, as defined in eq.\ (\ref{eq:tildeCgammaN}). Recall also that the number of non-primitive conjugacy classes is negligible:  
\begin{equation}\label{eq:nonprim}
    |Y_N \setminus Y_N^*|\ll N.
\end{equation}
We define the bipartite graph on $X_N,Y_N$ with an edge if a double coset and conjugacy class intersects non-trivially:
\begin{equation}\label{eq:G_N}G_N:=\{ ([\gamma]_{\infty},\{\gamma'\}) \in X_N\times Y_N:  [\gamma]_{\infty}\cap\{\gamma'\} \neq \emptyset \}.\end{equation} 

Given an element $x=[\gamma]_\infty\in X_N$, we define $$c(x):=|c_\gamma|,$$
to be the absolute lower-left entry of a representative $\gamma$ of the double coset (which we note is independent of the choice of representative). Note that when $\Gamma\subset \PSL_2(\Z)$ then $c(x)=\mathrm{denom}(\gamma \infty)$ is the denominator of the rational number $\gamma\infty\in \Q\cup\{\infty\}$.

For $y=\{ \gamma\}  \in Y_N$, we define $\CC_y\subset \mathcal{X}_\Gamma$ to be the oriented closed geodesic corresponding to the (hyperbolic) conjugacy class $\{ \gamma\} $ and put  
$$\ell(y):=\ell(\CC_y).$$ 
%In the case where $\Gamma\subset \PSL_2(\Z)$ we also define $$D(y):=\frac{(a_\gamma-d_\gamma)^2+b_\gamma c_\gamma}{(a_\gamma-d_\gamma,b_\gamma ,c_\gamma)^2},$$ 
%to be the discriminant of $\gamma$ (which again does not depend on the representative of the conjugacy class $y$). 

%Recall definition of graphs $G_N$ and $G_N'$. 
Let $f$ be either a Maa{\ss}  form for $\Gamma$ of weight $k\in 2\Z$ or a completed Eisenstein series with constant term $f_\infty(y)$ at the cusp $\infty$. For $x = [\gamma]_{\infty} \in X_N$, we define the \emph{vertical period of $f$ and $x$} as 
$$L_f(x):= \int_0^{\infty}  (f(\gamma \infty +iy)-f_\infty(y) )\frac{dy}{y}. $$ 
For $y \in Y_N$, denote the associated geodesic period by
$$\mathcal{P}_f(y):= \mathcal{P}_f(\CC_y), $$
where $\mathcal{P}_f(\CC_y)$ is defined as in equation (\ref{eq:geodesicperiods}).
%It is known that when $\Gamma = \PSL_2(\Z)$ and $f$ is a Maa{\ss} form \cite{DrNo} or holomorphic form \cite{BD}, or then $f$ is a holomorphic cusp form of weight 2 for a general fuchsian group $\Gamma$ \cite{Co}, then there exists a constant $C_f$ such that for any ball $B \subset \C$
%\begin{equation}
   % \frac{\left |\left \{ x \in X_N : \frac{L_f(x)}{C_f \sqrt{\log N}} \in B \right \} \right |}{|X_N|} =  \int_B e^{-\frac{|z|^2}{2}} dz  + O \lr{\frac{1}{\sqrt{\log N}}}. 
%\end{equation}
%Also, from \cite{No21} we know $\{ x \in X_N : L_f(x)\}$ are asymptotically normally distributed when $f$ is a holomorphic form for a general fuchsian group, but without precise rate of convergence.
We will use Proposition \ref{prop:main} to transfer information about the distribution of the values $\{ L_f(x) : x \in X_n \}$, which is known by Theorems \ref{thm:normaleffective1}, \ref{thm:normalnoneffective} and \ref{thm:normaleffective2}, to those of $\{ \mathcal{P}_f(y) : y \in Y_n \}$.

\subsection{Controlling the degrees of the graphs}
Let $G_N\subset X_N\times Y_N$ be as in the previous section. As mentioned in the introduction, the proofs of Theorems \ref{main thm nonvanishing}, \ref{thm:main} and \ref{thm:Eis} amount to an upper bound on the degrees $\deg(x)$ for $x\in X_N$ and a lower bound for the degrees $\deg(y)$ for $y\in Y_N$ (on average). 

\subsubsection{Degrees of vertices in $X_N$} This first bound is easily controlled as follows. 
\begin{lemma}\label{lem:c_x}
For $x\in X_N$ it holds that %{\color{red}(this only holds it we count primitive and imprimitive conjugacy classes! Has to be fixed somehow}
$$\deg(x)=\frac{N}{c(x)}+E(x),$$
where $|E(x)|\leq 2$. Here, the degree is taken with respect to the graph (\ref{eq:G_N}). %In particular, for $N\geq 5$ we have $\deg(x)\geq 1$ for all $x\in X_N$.
\end{lemma}
\begin{proof}
Let $x\in X_N$ and let $$\gamma=\begin{psmallmatrix}
    a_\gamma& b_\gamma\\ c_\gamma& d_\gamma
\end{psmallmatrix}\in \Gamma,$$ 
be such that $x=[\gamma]_{\infty}$ so that $0<c(x)=|c_\gamma|\leq N$. Since multiplication from the left by $\begin{psmallmatrix}
    1& \pm 1\\0&1
\end{psmallmatrix}$ changes the trace by $\pm c_\gamma$, we may arrange it so that $-c(x)/2\leq a_\gamma+d_\gamma<c(x)/2$. We note that for any $k\in \Z\setminus \{0\}$ the conjugacy classes of $\gamma$ and $\begin{psmallmatrix}
    1& k\\0&1
\end{psmallmatrix}\gamma$ are different as the (signed) traces are different. This implies that 
\begin{align} \deg(x)&=\#\{k\in \Z: N/2\leq  |a_\gamma+d_\gamma+kc(x)|\leq N\}.%\\&=\#\{k\in \Z:  |a_\gamma+d_\gamma+kc(x)|\leq N\}-\#\{k\in \Z: |a_\gamma+d_\gamma+kc(x)|\leq 2\}. 
\end{align}
We have the following elementary estimate:
$$\lfloor \tfrac{N/2}{c(x)}\rfloor\leq \#\{k\in \Z:  N/2\leq a_\gamma+d_\gamma+kc(x)\leq N\}\leq  \lfloor \tfrac{N/2}{c(x)}\rfloor+1,$$
and similarly for traces between $-N$ and $-N/2$. Since $ \tfrac{N/2}{c(x)}-1\leq \lfloor \tfrac{N/2}{c(x)}\rfloor\leq \tfrac{N/2}{c(x)}$ we get the wanted conclussion by adding the two contributions. 
%$$\#\{k\in \Z: |a_\gamma+d_\gamma+kc(x)|\leq 2\}\leq \lfloor \tfrac{5}{c(x)}\rfloor.$$
\end{proof}
Note that for typical $x\in X_N$ we have $c(x)\asymp N$ in which case $\deg (x)$ is bounded.
\subsubsection{The degrees of vertices in $Y_N$} We will now give a geometric lower bound for the degrees $\deg(y)$ with $y\in Y_N$. Let $M_\Gamma \geq 1$ and
\begin{equation}
  \label{eq:B}  \mathcal{B}:=\{z\in \H: -1/2< \Re z< 1/2, M_\Gamma < \Im z< 2M_\Gamma \}
\end{equation}
be such that the restriction of the natural projection  $\H \rightarrow \mathcal{X}_\Gamma$ to $\mathcal{B}$ is injective. This exists by the thick-thin decomposition (see e.g. \cite[Chap.\ 4.5]{Thurston97}) since we assumed that the cusp at infinity has width one. We will henceforth identify $\mathcal{B}$ with its image in $\mathcal{X}_\Gamma$. We have the following key geometric lower bound for the degrees of vertices in 
$$Y^\ast_N=\{\{\gamma\}\in \mathrm{Conj}(\Gamma)\text{ primitive}: N/2\leq |\tr \gamma|\leq N\}.$$
\begin{proposition}\label{prop:degy}
Let $N\geq 1$ and let $\mathcal{B}$ be as above. Then there exists a constant $C=C(\mathcal{B})>0$ (independent of $N$) such that for any $y\in Y_N^*$, we have
 $$\deg (y)\geq C\cdot  \ell(\CC_y\cap \mathcal{B}). $$
\end{proposition}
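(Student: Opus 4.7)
The plan is to show that distinct ``excursions'' (i.e., connected components) of $\CC_y\cap\mathcal{B}$ correspond, with bounded multiplicity, to distinct edges of $y$ in $G_N$; together with a trivial upper bound on the length of each excursion, this yields the proposition.

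Since $\mathcal{B}$ is relatively compact, its hyperbolic diameter $D = D(\mathcal{B})$ is finite and every geodesic arc contained in $\mathcal{B}$ has length at most $D$. Consequently, the number of connected components of $\CC_y \cap \mathcal{B}$ is at least $\ell(\CC_y \cap \mathcal{B})/D$. To each such excursion $E$ I associate a double coset as follows. Pick an interior point $p_E \in E$ and lift it, using a unique $T^n \in \Gamma_\infty$, to the fundamental strip
\begin{equation*}
\mathcal{F} := \{z \in \H : -1/2 < \Re z \leq 1/2,\ T_\Gamma \leq \Im z \leq 2T_\Gamma\},
\end{equation*}
which projects bijectively onto $\mathcal{B}$. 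The lift $\tilde p_E$, together with the direction of the geodesic flow, determines uniquely the oriented axis $A_E$ of some representative $\gamma_E \in \{\gamma\}$. The axis $A_E$ is a Euclidean semicircle passing through $\tilde p_E$, so its radius satisfies $r \geq \Im \tilde p_E \geq T_\Gamma \geq 1$, and the standard formula yields
\begin{equation*}
0 < |c_{\gamma_E}| = \frac{\sqrt{\tr(\gamma)^2 - 4}}{2r} \leq \frac{N}{2T_\Gamma} \leq N,
\end{equation*}
so $[\gamma_E]_\infty \in X_N$ and $\{\gamma\} \cap [\gamma_E]_\infty \ni \gamma_E$, exhibiting it as an edge of $y$.

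The main point is to bound the multiplicity of the map $E \mapsto [\gamma_E]_\infty$. Suppose $E \neq E'$ give the same double coset, so $\gamma_{E'} = T^{a_1}\gamma_E T^{a_2}$ for some integers $a_1,a_2$. Comparing traces forces $a_1+a_2=0$; hence $\gamma_{E'} = T^{a} \gamma_E T^{-a}$ and $A_{E'} = A_E + a$ as Euclidean semicircles (here I use that $y$ is primitive, so the centralizer of $\gamma$ in $\Gamma$ is $\langle\gamma\rangle$, hence $\gamma_E$ is the unique element of $\{\gamma\}$ with axis $A_E$). Since both semicircles $A_E$ and $A_E+a$ must meet $\mathcal{F}$, the integer $a$ is constrained to keep a semicircle of radius $r \geq T_\Gamma$ intersecting the fixed bounded rectangle $\mathcal{F}$. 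An elementary Euclidean computation, bounding the range of real horizontal shifts with this property by the sum of the $\Re$-widths of the two half-arcs of the semicircle at heights in $[T_\Gamma,2T_\Gamma]$, shows that the admissible shifts have total measure $\leq 2(1+T_\Gamma\sqrt 3)$, uniformly in $r \geq T_\Gamma$; consequently the number of admissible integers $a$ is at most some constant $M = M(\mathcal{B})$. Hence the map is at most $M$-to-one, and
\begin{equation*}
\deg(y) \geq \frac{\#\{\text{excursions}\}}{M} \geq \frac{1}{MD}\,\ell(\CC_y\cap\mathcal{B}),
\end{equation*}
which proves the proposition with $C := 1/(MD)$. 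The principal obstacle is the geometric multiplicity bound above; everything else is bookkeeping.
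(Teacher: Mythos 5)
Your argument is correct and reaches the same conclusion, but by a genuinely different route than the paper. The paper writes $\ell(\CC_y\cap\mathcal{B})$ as a sum of lengths $\ell(S_{\gamma'}\cap\mathcal{B})$ over representatives $\gamma'\in\{\gamma\}$, groups the terms by their double coset $[\gamma']_\infty$, and bounds the total length of each group by $\ell(\Gamma_\infty S_{\gamma'}\cap\mathcal{B})=\ell(S_{\gamma'}\cap\{T_\Gamma\leq\Im z\leq 2T_\Gamma\})\ll 1$, using the explicit hyperbolic computation in Corollary~\ref{length above Y}; this directly gives length $\ll$ degree. You instead discretize: you count excursions (components of $\CC_y\cap\mathcal{B}$), bound each excursion's length by the diameter $D$ of the embedded box to get at least $\ell/D$ of them, and then bound the multiplicity of $E\mapsto[\gamma_E]_\infty$ by an explicit Euclidean count of which horizontal integer shifts keep a semicircle of radius $r\geq T_\Gamma$ meeting the box. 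The two arguments are feeding the same geometric fact (a Euclidean semicircle passes through the box $\mathcal{B}$ in a uniformly bounded way) into the estimate, but package it differently: the paper bounds length-per-edge, you bound excursions-per-edge. Yours is a bit more combinatorial but equally valid; the paper's is slightly cleaner in that it avoids the diameter bound and the separate multiplicity step. One small imprecision to fix: your multiplicity bound should be $2M$ rather than $M$, since a single semicircle can meet the box in up to two arcs (the two branches of the semicircle in the height band $[T_\Gamma,2T_\Gamma]$ when $r>2T_\Gamma$), so a single admissible shift $a$ can account for two distinct excursions. This only changes the constant $C$ and does not affect the proposition.
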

The proof of Proposition \ref{prop:degy} will occupy the rest of this section. We start by recalling the following hyperbolic geometric fact. 
%Note that all the $\gamma$ contributing satisfies that $1\leq \tr(\gamma)/c_\gamma \leq 2$. This implies that we do not need to modify the measure on $X_n$, it seems. As one can restrict to $A_N$ with trace of size $N$ or some other argument. 
\begin{lemma}
Let $0<M\leq r$ and let $S$ denote the infinite geodesic connecting $-r$ and $r$.
Then $\ell(S\cap \{\Im z\geq M\})=2\log (r+\sqrt{r^2-M^2})-2\log M$. 
\end{lemma}
\begin{proof}
    Applying the matrix  
    $$\begin{pmatrix} (2r)^{-1/2} & -(r/2)^{1/2}\\(2r)^{-1/2} &(r/2)^{1/2}\end{pmatrix}\in \PSL_2(\R),$$   
    takes $S$ to the vertical geodesic from $0$ to $\infty$. The two points in $S\cap \{\Im z= M\}$ are taken to, respectively $z_0=i\tfrac{M^2}{r- \sqrt{r^2-M^2}}$ and $z_1=i\tfrac{M^2}{r+ \sqrt{r^2-M^2}}$. Now the first result follows since M\"{o}bius transformations preserve the geodesic length and 
    \begin{align*}
    \int_{z_0}^{z_1} \frac{|dz|}{\Im z}&= \log\left(r+ \sqrt{r^2-M^2}\right)-\log\left(r- \sqrt{r^2-M^2}\right)\\&=2\log\left(r+ \sqrt{r^2-M^2}\right)-2\log M,  \end{align*}
    as wanted.
\end{proof}
\begin{corollary}
\label{length above Y}
   Let $M>0$ and $r>0$ and let $S$ denote the infinite geodesic connection $-r$ and $r$.
Then it holds that 
$$\ell(S\cap \{M< \Im z< 2M\})\leq 2\log(2+\sqrt{3}).$$   
\end{corollary}
\begin{proof}
If $r< M$ then the intersection is empty. If $M\leq r <2M$ then by the above lemma we get that 
$$\ell(S \cap \{ \Im z>  M\})=2\log (r+\sqrt{r^2-M^2})-2\log M \leq 2\log (2+\sqrt{3}).$$ 
Finally if $r>2M$ then we have by the above that 
$$\ell(S \cap \{ \Im z> M\})=2\log (r+\sqrt{r^2-M^2})-2\log M-2\log (r+\sqrt{r^2-4M^2})+2\log 2M,$$
which has $r$-derivative equal to
$$ \frac{\sqrt{r^2-4M^2}-\sqrt{r^2-M^2}}{\sqrt{r^2-4M^2}\sqrt{r^2-M^2}}.$$
Thus it is a decreasing function for $r\geq 2M$ and at $r=2M$ we recover the same value as in the previous case.
\end{proof}

\begin{proof}[Proof of Proposition \ref{prop:degy}] 
%(TO BE ADDED)
%    In words, each lower-left entry $c$ contributes a bounded amount to $\ell(\CC_y\cap \mathcal{B})$.
%For a matrix $\gamma\in \Gamma$ with lower left entry $c>0$, the length of the intersection of the fixed circle $S_\gamma$ with $\Gamma_\infty$ translates of $\mathcal{B}$ is exactly 
%$$ \ell(S_\gamma \cap \{M_\Gamma \leq \Im z\leq 2M_\Gamma \}). $$
%Let $r=r_\gamma= \frac{\sqrt{(\tr \gamma)^2-4}}{2c_\gamma}$ be the radius of $S_\gamma$ and assume that $r\geq 1$. 

For $\gamma=\begin{psmallmatrix}
    a_\gamma&b_\gamma\\c_\gamma&d_\gamma
\end{psmallmatrix} \in \Gamma$ hyperbolic and primitive with $c_\gamma>0$, denote by $S_{\gamma}$ the axis of $\gamma$, i.e. the infinite geodesic half-circle with end-points in $\R$ given by 
$$\frac{a_\gamma-d_\gamma \pm \sqrt{(a_\gamma+d_\gamma)^2-4}}{2c_\gamma}.$$
For $\gamma,\gamma' \in \Gamma$ primitive hyperbolic and $\sigma\in \Gamma$ one has   $S_{\gamma} = \sigma S_{\gamma'}$ exactly if $ \gamma'= \sigma \gamma^{\pm 1} \sigma^{-1}$.
Also, note that
\begin{equation}
    \label{infinity contribution}
    \ell(\Gamma_{\infty} S_{\gamma} \cap \mathcal{B}) = \ell(S_{\gamma} \cap \Gamma_{\infty} \mathcal{B} ) = \ell(S_\gamma \cap \{M_\Gamma \leq \Im z\leq 2M_\Gamma \}) \ll 1,
\end{equation}
where we used Corollary \ref{length above Y} and $\Gamma_\infty=\{\begin{psmallmatrix}
    1&\Z\\0& 1
\end{psmallmatrix}\}$ denotes the stabilizer of $\infty$. Put in words, this means that the length of all translations of $S_{\gamma}$ intersected with $\mathcal{B}$ is bounded above by a constant.

Now fix a primitive oriented closed geodesic $\mathcal{C}\subset \mathcal{X}_\Gamma$. We consider the geodesic arcs of $\mathcal{C}$ that intersect $\mathcal{B}$ (considered as a subset of $\mathcal{X}_\Gamma$). We have that
$$\mathcal{C} \cap \mathcal{B} = \bigcup_{\gamma \in y(\CC)} (S_{\gamma} \cap \mathcal{B}),$$
where $y(\CC)\in \mathrm{Conj}(\Gamma)$ denotes the primitive conjugacy class associated to the closed geodesic $\mathcal{C}$. Note that if $S_{\gamma} \cap \mathcal{B} \neq \emptyset$, then clearly the radius of the axis $S_\gamma$ of $\gamma$ satisfies $r_{\gamma} \geq M_\Gamma \geq 1$. By the explicit formula:
$$r_{\gamma} = \frac{\sqrt{\tr(\gamma)^2-4}}{2 c_{\gamma}},$$
we conclude that $c_{\gamma} \leq |\tr(\gamma)|$. This shows that if $\{\gamma\}\in Y_N^*$ and $S_\gamma \cap \mathcal{B}\neq \emptyset$ then we have $[\gamma]_\infty \in X_N$, and thus $e=([\gamma]_{\infty} , \{ \gamma\})$ is an edge of $G_N$. Now if $\gamma'\in \Gamma$ satisfies $\gamma' \in \{\gamma\}$ and $\gamma' \in [\gamma]_\infty$ then we have
$$ \tr(\gamma)=\tr(\gamma')\quad \text{and}\quad \gamma=\begin{psmallmatrix}
    1& m\\ 0 &1
\end{psmallmatrix}\gamma' \begin{psmallmatrix}
    1& -n\\ 0 &1
\end{psmallmatrix}, $$
for some $m,n\in \Z$. Observe that we have $\tr(\begin{psmallmatrix}
    1& m\\ 0 &1
\end{psmallmatrix}\gamma'\begin{psmallmatrix}
    1& -n\\ 0 &1
\end{psmallmatrix})= \tr(\gamma')+(m-n)c_{\gamma'}$ where $c_{\gamma'}\neq 0$ denotes the lower-left entry of $\gamma'$  (which is non-zero since $\gamma'$ is hyperbolic). This means that $m=n$ and thus the elements $\gamma'\in y(\CC)$ contributing to the edge $e$ are exactly the conjugates of $\gamma$ by elements of $\Gamma_{\infty}$. But from \eqref{infinity contribution} and the preceding observation, we know that the total contribution of these conjugates to the total length of $\mathcal{C} \cap \mathcal{B}$ is $O(1)$. This shows indeed that $\ell(\mathcal{C} \cap \mathcal{B}) \ll \deg (\{\gamma \})$ as wanted.
\end{proof}

\subsubsection{Sparse equidistribution of closed geodesics}\label{sec:sparse}
In this section we prove a sparse equidistribution result for closed geodesic using the ergodicity of the geodesic flow, as well as a strenghtening for the modular group, using a result of Aka--Einsiedler \cite[Thm.\ 2]{akaeins}. Combined with Proposition \ref{prop:degy} this will allow us to control the degrees of the vertices in $Y_N$.  

Let $\phi:\Gamma\backslash\PSL_2(\R)\rightarrow \C$ be a continuous map. For $T\geq 1$ we denote by 
\begin{equation}\label{eq:flow}\phi_T:\Gamma\backslash\PSL_2(\R)\rightarrow \C,\quad g\mapsto\frac{1}{T}\int_0^T \phi(ga_t)dt,\quad a_t=\begin{psmallmatrix}e^{t/2}&0\\0 &e^{-t/2}\end{psmallmatrix},\end{equation}
the average of $\phi$ under the geodesic flow. Recall that the geodesic flow satisfies the $L^1$-ergodic theorem (see \cite[Prop. 4]{akaeins} for a stronger statement), meaning that for $\phi:\Gamma\backslash\PSL_2(\R)\rightarrow \C$ continuous and bounded it holds that:
\begin{equation}\label{eq:ergodic}\phi_T\xrightarrow{L^1} \langle \phi,1\rangle, \quad T\rightarrow \infty,
\end{equation}
in $L^1$-sense with respect to the Haar measure on $\Gamma\backslash \PSL_2(\R)$.%-We denote by $C_c^\infty(\Gamma\backslash\PSL_2(\R))\oplus \C$ the space of compactly supported smooth functions modulo the constants.  A key point is that the effect of the geodesic flow is controlled in the following sense.
\begin{theorem}
\label{thm:sparse}
Let $\Gamma$ be a Fuchsian group of finite covolume. Fix $\epsilon>0$. For each $X\geq 1$ let 
$$I_X\subset C_\Gamma(X)=\{\CC\subset \mathcal{X}_\Gamma \text{ \rm  primitive closed geodesic}: \ell(\CC)\leq X\},$$
be a subcollection of closed geodesics such that 
%$$ \sum_{\CC\in I_N} \ell(\CC)\geq \epsilon \sum_{\CC: \ell(\CC)\leq N} \ell(\CC). $$
$$ \ell(I_X)\geq  \epsilon\cdot \ell(C_\Gamma(X)). $$
%$$ \ell(I_X)=\sum_{\CC\in I_X} \ell(\CC)\geq  \epsilon\cdot \ell(C_\Gamma(X))= \epsilon\cdot \sum_{\CC: \ell(\CC)\leq X} \ell(\CC). $$
Then the collections of closed geodesics $I_X$ equidistribute in the unit tangent bundle of $\mathcal{X}_\Gamma$ as $X\rightarrow \infty$. Here $\ell(I_X)$ and $\ell(C_\Gamma(X))$ are defined as in eq.\ (\ref{eq:collection}).
\end{theorem}
\begin{proof} 
By the equidistribution of closed geodesics $C_\Gamma(X)$ in the unit tangent bundle as in e.g. \cite[Cor.\ 6.5]{Zelditch89}, we have that for any continuous and bounded function $\phi:\Gamma\backslash \PSL_2(\R)\rightarrow\C$, it holds that
\begin{equation}
    \label{eq:ZelditchEQUI}\frac{1}{\ell(C_\Gamma(X))} \sum_{\CC\in C_\Gamma(X)} \mu_\CC(\phi)\rightarrow \langle \phi,1\rangle,\quad X\rightarrow\infty. 
\end{equation} 
Now let $\phi:\Gamma\backslash \PSL_2(\R)\rightarrow \C$ be continuous and bounded. Then we have to show that 
$$ \frac{1}{\ell(I_X)} \sum_{\CC\in I_X} \mu_\CC(\phi)\xrightarrow{?} \langle \phi,1\rangle. $$
Since the measure on the left hand side is a probability measure, we may reduce to the case $\langle\phi,1\rangle=0$. By the invariance of $\mu_\CC$ under the geodesic flow we have for any $T\geq 1$:
\begin{align}
 \nonumber \left|\frac{1}{\ell(I_X)} \sum_{\CC\in I_X} \mu_\CC(\phi)\right|= \left| \frac{1}{\ell(I_X)} \sum_{\CC\in I_X} \mu_\CC(\phi_T)\right| &\leq  \frac{\ell(C_\Gamma(X))}{\ell(I_X)} \frac{1}{\ell(C_\Gamma(X))} \sum_{\CC\in I_X} \mu_\CC(|\phi_T|)\\
 \label{eq:smallmeasure}&\leq \frac{\epsilon^{-1}}{\ell(C_\Gamma(X))} \sum_{\CC\in C_\Gamma(X)} \mu_\CC(|\phi_T|).
\end{align}
Note that by eq.\ (\ref{eq:ZelditchEQUI})  the right hand side of the above converges to $\langle |\phi_T|,1\rangle $ as $X\rightarrow \infty$ (for any fixed $T$). Now by the $L^1$-ergodic theorem (\ref{eq:ergodic}), the quantity $\langle |\phi_T|,1\rangle=\int_{\Gamma\backslash\PSL_2(\R)}|\phi_T| $ converges to $0$ as $T\rightarrow \infty$ (using here that $\langle \phi,1\rangle=0$). It follows that indeed the quantity (\ref{eq:smallmeasure}) can be made arbitrarily small for $X$ and $T$ sufficiently large.  
\end{proof}
In the case of the modular group we have the following  improvement by a result of Aka--Einsiedler \cite[Thm.\ 2]{akaeins}.
\begin{theorem}
\label{thm:AkaEin}%{\color{red} change to $\leq L$?}
Let $\mathcal{X}_0(1):=\PSL_2(\Z)\backslash \H$ be the modular surface. Let $\psi : \R_{>0} \to \R_{>0}$ a function such that $\lim_{X \to \infty} \psi(X) = \infty$. For each $X\geq 1$ let 
$$I_X\subset C(X)=\{\CC\subset \mathcal{X}_0(1)\text{ \rm  primitive closed geodesic}: \ell(\CC)\leq  X\} $$
be a subcollection of closed geodesics such that 
%$$ \sum_{\CC\in I_N} \ell(\CC)\geq \epsilon \sum_{\CC: \ell(\CC)\leq N} \ell(\CC). $$
$$ \ell(I_X)\geq  \psi(X) \frac{\ell(C(X))}{X }. $$
Then the collections of closed geodesics $I_X$ equidistribute in the unit tangent bundle of the modular  surface $\mathcal{X}_0(1)$ as $X\rightarrow \infty$.  Here $\ell(I_X)$ and $\ell(C(X))$ are defined as in eq.\ (\ref{eq:collection}).
\end{theorem}
\begin{proof}
    %(can be written better, just writing the outline)
We may reduce to the case where $\psi$ is increasing. As explained in \cite{sarnak82} the primitive oriented closed geodesics on the modular curve $\PSL_2(\Z)\backslash \H$ are parameterized by elements of narrow class groups $\Cl^+_D$ of quadratic orders of (ring) discriminant $D>0$. The length of a geodesic of discriminant $D$ equals $2\log \epsilon_D$, where $\epsilon_D=u+\sqrt{D}v$ is the fundamental positive unit of the quadratic order $\mathcal{O}_D$ of discriminant $D$ (i.e. $u^2-Dv^2=1$ is the fundamental solution with $u,v\in \tfrac{1}{2}\Z_{\geq 1}$). We denote 
    $$\mathcal{D}(X):= \{ D>0 \text{ ring discriminant } : 2 \log \epsilon_D \leq X\}.$$
    Then from \cite{sarnak82}, we know that  for any $\epsilon>0$,
    \begin{equation}
        \label{eq:D(X)}|\mathcal{D}(X)| = \tfrac{35}{16} e^{X/2} + O_\epsilon\lr{e^{(1/3+\epsilon)X}}.
    \end{equation}
    Let $\mathcal{C}_D$ denote the set of primitive oriented closed geodesics on $\mathcal{X}_0(1)$ of discriminant $D$ and put $I_D:=\CC_D\cap I_X$. %Then we can write $$I_X=\bigcup_{\epsilon_D\leq e^{X/2}} I_D,$$
   Put
    $$\mathcal{A}(X):= \left \{ D \in \mathcal{D}(X) \ : \ \ell(I_D) \geq \psi(X)^{1/2} \frac{\ell(\CC_D)}{\log \ell(\CC_D)}\right \}.$$
    %$$\mathcal{A}(X):= \left \{ D \in \mathcal{D}(X) \ : \ |I_D| \geq \frac{\alpha(\log D)}{\log (\alpha(\log D))} \frac{|\mathcal{C}_D|}{\log D}\right \}.$$
    Note that $X\geq \log (D/4)$ since $\epsilon_D\geq \sqrt{D}/2$, which implies that $\psi(X)\geq \psi(\log (D/4))$.  From the class number formula, we have the (effective) upper bound $\ell(\mathcal{C}_D) = |\mathcal{C}_D| \log \epsilon_D^2 \ll_\epsilon D^{1/2+\epsilon}$ for $\epsilon>0$. This implies that $\log \ell(\CC_D)\leq \log D$ for $D$ sufficiently large.  Thus, by the definition of $\mathcal{A}(X)$ and since $\psi(\log (D/4))\rightarrow \infty$ as $D\rightarrow \infty$, we conclude  from \cite[Theorem 2]{akaeins} that for $D \in \mathcal{A}(X)$, the geodesics from $I_D$ equidistribute as $D \to \infty$. This implies that the union of geodesics $\cup_{D \in \mathcal{A}(X)} I_D$ equidistributes as $X\rightarrow \infty$ by a standard averaging argument. Therefore the conclusion follows if we can show
    $$\sum_{D \in \mathcal{D}(X) \setminus \mathcal{A}(X)} \ell(I_D) \stackrel{?}{=} o \lr{\ell(I_X)}.$$
    To achieve this, we split in terms of the size of $\ell(\CC_D)$:
     \begin{align*}
        \sum_{D \in \mathcal{D}(X)\setminus \mathcal{A}(X)} \ell(I_D)
        & = \sum_{\substack{D \in \mathcal{D}(X)\setminus \mathcal{A}(X)\\ \ell(\CC_D)\leq e^{X/3}}} \ell(I_D) +\sum_{\substack{D \in \mathcal{D}(X)\setminus \mathcal{A}(X)\\ \ell(\CC_D)> e^{X/3}}} \ell(I_D)\\
        &\leq |\mathcal{D}(X)|e^{X/3}+\psi(X)^{1/2} \sum_{D\in \mathcal{D}(X)}\frac{\ell(\CC_D)}{X/3} \\
        &\ll e^{5X/6}+\psi(X)^{1/2}\frac{\ell(C(X))}{X},
        \end{align*}
        where we used the bound (\ref{eq:D(X)}) in the last inequality. Recall that by the prime geodesic theorem we have $\ell(C(X))\sim e^X$. By the assumption on $\ell(I_X)$ and since $\psi(X)^{1/2}\rightarrow \infty$, we conclude that the above is indeed $o(\ell(I_X))$ as wanted.
\end{proof}
%\begin{proof}
%In the case of $\PSL_2(\Z)$ (and more generally for congruence subgroups) it is known that the effective equidistribution (\ref{eq:Zelditch}) holds with an error-term $E_1(t_f,k)$ depending polynomially on $t_f$ and $k$ (REF?). As explained in  see \cite[p.\ 85]{zel92} and discussed in great detail in the proof for \cite[Thm.\ 5.4]{Zelditch89}, this amounts to two things: 1) a bound on the (non-standard) Harrish-Chandra transforms  appearing in the trace formula defined in \cite[p.\ 97]{zel92} 2)  bounds for triple products $\langle F_1, |F_2|^2\rangle$ with $F_i$ Maass forms with (correct) exponential decay in the spectral parameter as well as polynomial dependence on all other parameters. This was investigated in    Using the exact same argument as in \cite[Sec.\ 3]{akaeins} this yields the wanted conclusion (note that $\ell(C(X))\sim e^{X}$). 
%\end{proof}
\begin{remark}
It is likely that one can obtain a proof of the sparse equidistribution in Theorem \ref{thm:AkaEin} for a general Fuchsian group of finite covolume. By the method of Aka--Einsiedler \cite{akaeins}, this reduces to obtaining a version of   Zelditch's effective equidistribution theorem \cite[Thm.\ 6.1]{Zelditch89} with polynomial dependence on the spectral parameter. This should follow by a detailed analysis of the hypergeometric functions appearing in Zelditch's trace formula, see \cite[p.\ 85]{zel92}, as well as invoking a version of Sarnak's bound for triple periods \cite{sarnak94} with a polynomial dependence of the spectral datum of the ``fixed'' Maa{\ss} forms (as was carried out in a special case in \cite[Appendix A]{HuangLester23}). We have not pursued this.    
\end{remark}
\section{Non-vanishing of geodesic periods}
\label{sec:nonvanishgeo}
%I'm assuming:
%\begin{itemize}
    %\item normal distribution of $\left \{ \frac{L_f(\gamma_{\infty})}{\sqrt{C_f \log N}} : [\gamma]_{\infty} \in X_N \right \}$ with rate $O \lr{(\log N)^{-1/2}}$;

    %\item Proposition \ref{prop:main}

    %\item Theorem \ref{sparse}

   % \item $\deg (y) \gg l (\mathcal{C}_y \cap \mathcal{B}) $
%\end{itemize}
In this section we will prove non-vanishing results for geodesic periods using the results in the previous sections. We will prove two versions: one where we obtain strong quantitative bounds on the number of ``very small'' geodesic periods, and on the other hand, we show that $100\%$ of geodesic periods are not ``too small''. 

We will keep the notation from the previous sections. The following is a slight quantitative strengthening of Theorem \ref{main thm nonvanishing}. 
\begin{theorem}
\label{small length bound}
    Let $f$ be a Hecke--Maa{\ss} cusp form of weight $0$ for the modular group $\Gamma=\PSL_2(\Z)$ and let $\delta>0$. Then we have that 
    $$\left|\left\{ y \in Y_N^* : \left| \mathcal{P}_f(y)\right|\leq (\log N)^{1/2-\delta}   \right\}\right| \ll \frac{N^2}{(\log N)^{1+\min (\delta,1/4)}},$$
    as $N\rightarrow \infty$. Here $Y_N^\ast$ denotes primitive conjugacy classes with trace between $N/2$ and $N$ as defined in Section \ref{sec:doublconj}.
\end{theorem}
Before giving the proof, we note that $Y_N^*$ corresponds to conjugacy classes with trace between $N/2$ and $N$ whereas $C(2\log N)$ as in equation (\ref{eq:CX}) corresponds to trace between $2$ and $N$ (i.e. putting $X=2\log N$). By doing a dyadic decomposition one can easily conclude  Theorem \ref{main thm nonvanishing} from the above theorem. We will skip the details.
%Fix $\epsilon >0$. Suppose for contradiction that $|A_N| \geq \frac{N^2}{(\log N)^{1+ \alpha}}$, where $\alpha=1/3-\epsilon$.
\begin{proof}
Put $\alpha=1+\min (\delta,1/4)$ and  
$$A_N: = \left\{ y \in Y_N^* : \left| \mathcal{P}_f(y)\right|\leq (\log N)^{1/2-\delta}   \right\}.$$
Clearly we may suppose that 
    $|A_N| \geq c' \frac{N^2}{(\log N)^{\alpha}}$, where $c'$ is a sufficiently large constant. This implies that
\begin{align}\label{eq:equidAnBn}
    \sum_{y \in A_N} \ell(\CC_y) \gg \log N\frac{N^2}{(\log N)^{\alpha}}\gg \frac{N^2}{(\log N)^{1/4}}.
\end{align}
%Let $$B_N := \left \{ y \in Y_N^* : \tr(y) \leq \frac{N}{(\log N)^{1/3}} \right \} . $$
%Then by the prime geodesic theorem we have that 
%\begin{align*}
   % |B_N| \ll \left(\frac{N}{(\log N)^{1/3}}\right)^2 \lr{\log \lr{\frac{N}{(\log N)^{1/3}}}}^{-1} \ll \frac{N^2}{(\log N)^{5/3}}.
%\end{align*}
%In particular, we see that by the assumption on the size of $A_N$ we have 
%$$|A_N|\sim |A_N| ,\quad N\rightarrow\infty,$$since $5/4<5/3$. T
%This implies that as $N\rightarrow \infty$ we have 
%\begin{align}\label{eq:equidAnBn}
   % \sum_{y \in A_N} \ell(\CC_y) \gg \log N\frac{N^2}{(\log N)^{\alpha}}\gg \frac{N^2}{(\log N)^{1/4}}
%\end{align} by the definition of $B_N$. 

Let $B_N :=e(A_N)  \subset X_N$ be the set of neighbours of  $A_N$. Recall that $\tr(y)\geq N/2$ for $y\in Y_N$. From Proposition \ref{prop:main}, we see that for $x \in B_N$,
$$|L_f(x)| \leq \left|\mathcal{P}_f(y)\right|+O\left(1\right)\ll (\log N)^{1/2-\delta}.$$
In particular, there exists some constant $c''>0$ such that $x\in B_N$ implies that $|L_f(x)/(\log N)^{1/2}|\leq c'' (\log N)^{-(\alpha-1)} $. From the normal distribution of the vertical periods as in Theorem \ref{thm:normaleffective2}, we have that
\begin{align}\label{eq:normaldistbound}
    \frac{|B_N|}{|X_N|} \leq \int_{|x|\leq c''(\log N)^{-(\alpha-1)}} \frac{e^{-x^2/2}}{\sqrt{2 \pi}} dx + O \lr{\frac{1}{\sqrt{\log N}}} \ll (\log N)^{-2(\alpha-1)}, 
\end{align}
using that $2(\alpha-1)=2\min (1/4,\delta)\leq 1/2$.

We now proceed to count the total number of edges out of both $A_N$ and $B_N$ in two different ways. On the one hand, we have
\begin{align*}
    \sum_{y \in A_N} \deg y &\gg \sum_{y \in A_N} l(\mathcal{C}_y \cap \mathcal{B})\gg \sum_{y \in A_N} l(\mathcal{C}_y) \gg  |A_N| \log  N %\gg \frac{N^2}{(\log N)^{\alpha}},
\end{align*}
where  in the first inequality we applied Proposition \ref{prop:degy} (recall the definition of $\mathcal{B}$ in eq.\ (\ref{eq:B})), in the second we applied  the equidistribution result Theorem \ref{thm:AkaEin} (with $X=2\log N$) to the closed geodesics in $A_N$ which applies by the lower bound (\ref{eq:equidAnBn}). 

On the other hand, we can upper bound the degrees in $B_N$ as follows for any $\beta>0$:

\begin{align*}
    \sum_{x \in B_N} \deg x = \sum_{\substack{x \in B_N: \\ c_x \leq \frac{N}{(\log N)^{\beta}}}} \deg x  + \sum_{\substack{x \in B_N: \\ c_x > \frac{N}{(\log N)^{\beta}}}} \deg x &\leq \sum_{\substack{x \in X_N: \\ c_x \leq \frac{N}{(\log N)^{\beta}}}} \deg x + 2 \sum_{x \in B_N} (\log N)^{\beta} \\
    &\ll \frac{N^2}{(\log N)^{ \beta}} + |B_N| (\log N)^{\beta} \\
    &\ll \frac{N^2}{(\log N)^{\beta}} + \frac{N^2}{(\log N)^{2-2\alpha - \beta}},
\end{align*}
where in the second to last inequality we have applied equation (\ref{eq:sizeXY}) and  Lemma \ref{lem:c_x}, and the last inequality follows from the upper bound (\ref{eq:normaldistbound}) on the size of  $B_N$.
Choosing $\beta=\alpha-1$, we obtain an upper bound of $N^2/(\log N)^{\alpha-1}$.
Notice that since $B_N\subset X_N$ consists of all the neighbours of $A_N$, we have the trivial inequality
$$\sum_{x \in B_N} \deg x\geq \sum_{y \in A_N} \deg y$$
Combining all of the above we arrive at 
$$  |A_N|\ll \frac{1}{\log N}\sum_{y\in A_N} \deg y \leq \frac{1}{\log N}\sum_{x\in B_N} \deg x\ll \frac{N^2}{(\log N)^{\alpha-1+1}}, $$
as wanted.
\end{proof}

We will now proceed to prove Theorems \ref{thm:main} and \ref{thm:Eis} from the introduction. The proofs are very similar to the preceding one with the only difference being that for periods of Eisenstein series we will only have to control the contribution from $x\in X_N$ for which $c(x)$ is small.
\begin{proof}[Proof of Theorems \ref{thm:main} and \ref{thm:Eis}]
As above we may reduce to a dyadic interval and consider conjugacy classes in $Y_N^\ast$. We may assume that $h(N)\leq \log\log \log N$. Put 
\begin{align*}
A_N^- = &\left\{ y \in Y_N^* : \left|\mathcal{P}_f(y)\right| \leq (\log N)^{1/2} (\log \log N)^{\delta_f}/h(N) \right\}, \\
A_N^+ = &\left\{ y \in Y_N^* : \left|\mathcal{P}_f(y)\right|\geq (\log N)^{1/2} (\log \log N)^{\delta_f} h(N)  \right\}, \\
A_N = & A_N^- \cup A_N^+, \end{align*}
where $\delta_f=3/2$ if $f=E^\ast$ is the completed Eisenstein series of level 1 and weight 0 and $\delta_f=0$ otherwise.
We want to show that $|A_N|=o(N^2/\log N)$ as $N\rightarrow \infty$. Let $\epsilon>0$ and assume for contradiction that $|A_N|\geq \epsilon N^2/\log N$ for infinitely many $N$. 
Let \begin{equation}
  B_N^\pm=e(A_N^\pm),\quad   D_N=\{x \in X_N: c(x)\leq N/(\log N)^{1/2} \}.
\end{equation} 
Then by Proposition \ref{prop:main} we have that for $x\in B_N^- \setminus D_N$:
\begin{align*}
    |L_f(x)| &\ll (\log N)^{1/2}  (\log \log N)^{\delta_f} /h(N)+ (\log N)^{1/4 + 1/100} + (\log\log N)^{1/2+1/100}\\
    &\ll (\log N)^{1/2}  (\log \log N)^{\delta_f} /h(N),  
\end{align*}
and similarly for $x \in B_N^+ \setminus D_N$. Note that since $h(N)\rightarrow \infty$, we have
\begin{align}
    \mathbb{P}(|\mathcal{N}_\R(\sigma,\mu)|\leq h(N)^{-1})+\mathbb{P}(|\mathcal{N}_\R(\sigma,\mu)|\geq h(N))&\rightarrow 0,\quad N\rightarrow \infty,
\end{align}
where $\mathcal{N}_\R(\sigma,\mu)$ denotes a  real valued normally distributed random variable with mean $\mu$ and variance $\sigma$. The same holds for a complex valued normally distributed random variable $\mathcal{N}_\C(\sigma,\mu)$. Thus, with $B_N=B_N^+ \cup B_N^-$, we conclude by the normal distribution results in Theorems \ref{thm:normaleffective1}, \ref{thm:normalnoneffective}, \ref{thm:normaleffective2} and \ref{thm:normalnoneffective2} that 
$$ \frac{|B_N \setminus D_N|}{|X_N|}\rightarrow 0,\quad N\rightarrow \infty.$$
Let  $\psi: \R_{>0}\rightarrow \R_{>0}$ be an increasing (and sufficiently slowly growing) function such that $\psi(N)\rightarrow \infty$ and $|B_N \setminus D_N| \psi(N)=o(N^2)$ as $N\rightarrow \infty$ . We proceed as above and count the number of edges between $B_N \setminus D_N$ and $A_N$ in two different ways. First of all the total number of such edges is clearly upper bounded by 
\begin{align}
   \nonumber \sum_{x\in B_N \setminus D_N}\deg x &= \sum_{\substack{x \in B_N \setminus D_N: \\ c_x \leq \frac{N}{\psi(N)}}} \deg x  + \sum_{\substack{x \in B_N \setminus D_N: \\ c_x > \frac{N}{\psi(N)}}} \deg x\\
  \label{eq:degreeBN}  &\ll \frac{N^2}{\psi(N)}+ |B_N \setminus D_N|\psi(N)  = o(N^2).
\end{align}
On the other hand, by the assumptions on the size of $A_N$ Theorem \ref{thm:sparse} applies and we conclude  by equidistribution and the lower bound Proposition \ref{prop:degy}:
\begin{equation}\label{eq:degreeAN}\sum_{y\in A_N}\deg y \gg \sum_{y\in A_N}\ell(\CC_y\cap \mathcal{B})\gg  |A_N| \log N \gg_\epsilon   N^2.\end{equation}
 Recall that by definition $B_N$ is the set of neighbors of $A_N$, so the number of edges between $A_N$ and $B_N \setminus D_N$ is lower bounded by
 \begin{align*}
\sum_{y\in A_N}\deg y - \sum_{x \in D_N} \deg x \geq \sum_{y\in A_N}\deg y - 2\frac{N^2}{(\log N)^{1/2}} \gg_\epsilon N^2,
 \end{align*}
using the bound (\ref{eq:degreeAN}). This contradicts the bound in (\ref{eq:degreeBN}).
\end{proof}
\begin{remark}
Using a similar argument as in the proof of Theorem \ref{small length bound}, one can obtain upper bounds for the set of large geodesic periods. More precisely, for any $\delta>0$, we have
$$\left |  \left\{  y \in Y_N^* : \left| \mathcal{P}_f(y) \right|\geq (\log N)^{1/2+\delta}   \right\} \right | \ll_{\delta} \frac{N^2}{(\log N)^2}.$$
Note that this is stronger than the bound obtained in Theorem \ref{small length bound}. The additional gain comes from the fact that we have a strong upper bound for the set of large vertical line periods. That is, we have for any $A>0$,
$$\left | \left \{ x \in X_N: |L_f(x)| \geq (\log N)^{1/2+\delta} \right \}\right | \ll_{\delta,A} \frac{N^2}{(\log N)^A}.$$
Such an estimate follows easily from the $2k$-th moment \cite[Thm.\ 1.6]{No21}, \cite[Thm.\ 7.5]{PeRi2}, \cite[Prop.\ 7.9]{DrNo}: for each $k\geq 0$ there exists $d_k>0$ such that
$$\frac{1}{|X_N|}\sum_{x \in X_N} |L_f(x)|^{2k} \sim d_k (\log N)^k, \quad N\rightarrow \infty.$$
%for some constant $d_k>0$.
\end{remark}
\section{Normal distribution with weighted measures}\label{sec:normal}
In this section we prove Theorem \ref{thm:CLTintro}. We begin by considering a general setup with measures on a bipartite graph and give conditions for when one can ``lift'' the distribution from one component to the other. More precisely, we consider the push-forward with respect to the bipartite of a measure on one component to the other. We will see that, by Proposition \ref{prop:main}, the relevant conditions are satisfied for the bipartite graph defined in terms of double cosets and conjugacy classes. Combining with the normal distribution for vertical periods as in Theorems \ref{thm:normaleffective1}, \ref{thm:normalnoneffective} and \ref{thm:normaleffective2}, we will conclude the proof of Theorem \ref{thm:CLTintro}.   
\subsection{Measures on bipartite graphs}
\label{sec:graphs}
%We will introduce an abstract framework for transferring measures from one component to another in a bipartite graph. We will in the next section apply this to graphs constructed from double cosets and conjugacy classes.
Let $(X,\mu),(Y,\nu)$ be finite discrete probability (measure) spaces, i.e. $X,Y$ are finite sets equipped with the $\sigma$-algebra consisting of all subsets and $\mu$ (resp.\ $\nu$) are probability measures on $X$ (resp.\ $Y$). As above let $G\subset X\times Y$ be a bipartite graph on $X,Y$. We will denote by $G(\mu)$ the \emph{$G$-transform of $\mu$}, which is the probability measure on $Y$ given by 
\begin{equation}
\label{eq:Gtransform}G(\mu)(\{y\}):=\sum_{x\in e(y)}\frac{\mu(x)}{\deg(x)},\quad y\in Y,\end{equation}
and similarly we define the measure $G(\nu)$ on $X$. Note that for $x\in e(y)$, we have $y\in e(x)$, and thus $\deg (x)\geq 1$. 
\begin{lemma}\label{lem:cute}
For any subset $B\subset Y$ we have that
$$\mu(e^{-1}(B)) \leq G(\mu)(B)\leq \mu(e(B)).$$
\end{lemma}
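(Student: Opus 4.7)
The plan is to expand $G(\mu)(B)$ as an iterated sum and swap the order of summation. By definition,
\[
G(\mu)(B) = \sum_{y \in B} G(\mu)(\{y\}) = \sum_{y \in B} \sum_{x \in e(y)} \frac{\mu(x)}{\deg(x)}.
\]
Using the symmetry of the bipartite relation $x \in e(y) \iff y \in e(x)$, I would rewrite this as
\[
G(\mu)(B) = \sum_{x \in X} \frac{\mu(x)}{\deg(x)} \cdot \#\bigl(e(x) \cap B\bigr) = \sum_{x \in X} \mu(x) \cdot \frac{\#(e(x) \cap B)}{\deg(x)}.
\]

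Now the key observation is that the ratio $r(x) := \#(e(x) \cap B)/\deg(x)$ lies in $[0,1]$, with the following clean characterizations: $r(x) = 1$ precisely when $e(x) \subset B$, i.e. $x \in e^{-1}(B)$; while $r(x) > 0$ precisely when $e(x) \cap B \neq \emptyset$, i.e. $x \in e(B)$; and $r(x) = 0$ otherwise. Therefore, restricting the sum to $e^{-1}(B)$ and using $r \geq 1$ there gives
\[
G(\mu)(B) \geq \sum_{x \in e^{-1}(B)} \mu(x) = \mu(e^{-1}(B)),
\]
while bounding $r(x) \leq 1$ and noting that $r$ is supported on $e(B)$ yields
\[
G(\mu)(B) \leq \sum_{x \in e(B)} \mu(x) = \mu(e(B)).
\]

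There is no real obstacle here: the argument is a one-line Fubini combined with the trivial bounds $0 \leq r(x) \leq 1$. The only thing to be careful about is the bookkeeping of the relation between $e^{-1}(B)$ and $e(B)$ (the former is the set of $x$ all of whose neighbors lie in $B$, the latter is the set of $x$ with at least one neighbor in $B$), and the fact that a vertex $x \notin e(B)$ contributes zero to the sum since $e(x) \cap B = \emptyset$. No further input about the measure $\mu$ or about $\nu$ is needed.
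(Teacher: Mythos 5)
Your proof is correct and is essentially the same as the paper's: both interchange the order of summation to write $G(\mu)(B)$ as a sum over $x$ of $\mu(x)\cdot\#(e(x)\cap B)/\deg(x)$ and then bound this ratio by $0$ and $1$, noting it equals $1$ exactly on $e^{-1}(B)$ and vanishes off $e(B)$. (One pedantic point: in the line "$G(\mu)(B)=\sum_{x\in X}\mu(x)\cdot r(x)$'' you should sum over $x$ with $\deg(x)\geq 1$, or set $r(x):=0$ when $\deg(x)=0$, to avoid a $0/0$; the paper sidesteps this by summing over $x\in e(B)$ directly.)
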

\begin{proof}
We have by definition of $G(\mu)$:
\begin{align*}G(\mu)(B)=\sum_{y\in B}\sum_{x\in e(y)}\frac{\mu(x)}{\deg(x)}&= \sum_{x\in e(B)}\frac{\mu(x)}{\deg(x)} \cdot \#(e(x)\cap B)\\
&\leq \sum_{x\in e(B)}\frac{\mu(x)}{\deg(x)} \cdot \deg(x)= \mu(e(B)). \end{align*}
Similarly,
$$G(\mu)(B)= \sum_{x\in e(B)}\frac{\mu(x)}{\deg(x)} \cdot \#(e(x)\cap B) \geq \sum_{x\in e^{-1}(B)}\frac{\mu(x)}{\deg(x)}  \cdot \deg(x)= \mu(e^{-1}(B)). $$
\end{proof}
\subsection{Lifting the distribution}\label{sec:lifting}

Let $(G_n)_{n \geq 1}$ be a sequence of bipartite graphs on $(X_n, Y_n)_{n \geq 1}$. For each $n\geq 1$, let $X_n$ be equipped with a probability measure $\mu_n$. In addition, suppose we have $f_n: X_n \to \R$ such that the push-forward $(f_n)^*(\mu_n)$ converges as $n\rightarrow \infty$ to a distribution on $\R$ with density function $F:\R\rightarrow \R_{\geq 0}$. In other words, for $a<b$ we have
$$\lim_{n \to \infty} \mu_n \left(\{ x \in X_n : f_n(x) \in [a,b] \}\right) = \int_{a}^b F(x) dx . $$

Moreover, we say that we have \emph{explicit rate of convergence} if there exists $h: \R_{>0} \to \R_{>0}$ such that $\lim_{x \to \infty} h(x) = 0$
and 
\begin{equation}
\label{rate of conv}
\sup_{a<b} \left |\mu_n \left(\{ x \in X_n : f_n(x) \in [a,b] \}\right) -  \int_{a}^b F(x) dx \right | = O(h(n)). 
\end{equation}

We want to show that if we can define $g_n:Y_n \to \R$ such that $f_n(x)$ and $g_n(y)$ are ``close'' whenever $x$ and $y$ are connected by an edge in the bipartite graph $G_n$, then we can obtain information about the distribution of $(g_n)^*(G_n(\mu_n))$. In other words, using the structure of the graph $G_n$, we can ``lift'' the limit value distribution of $f_n$ on $X_n$ to a limit value distribution of $g_n$ on $Y_n$.

\begin{lemma}
\label{real graph}
    Let $(G_n)_{n\geq 1}$ be a sequence of bipartite graphs on $(X_n, Y_n)_{n\geq 1}$. For each $n\geq 1$, let $\mu_n$ be a probability measure on $X_n$ and let $f_n:X_n \to \R$ be as above such that \eqref{rate of conv} holds for a continuous density function $F$ and some  $h:\R_{>0} \to \R_{>0}$ such that $\lim_{x \to \infty} h(x) = 0$. Moreover, suppose there exists $g_n: Y_n \to \R$ such that whenever there is an edge between $x \in X_n$ and $y \in Y_n$ in $G_n$, we have $g_n(y)=f_n(x) + O(E(n))$, where $E: \R_{>0} \to \R_{>0}$ satisfies $\lim_{x \to \infty} E(x) = 0$. Then we have 
    $$ \sup_{a<b} \left|G_n(\mu_n) (\{ y \in Y_n : g_n(x) \in [a,b] \}) -  \int_{a}^b F(x) dx \right | = O_F(h(n) + E(n)),$$
    where the implied constant is independent of $a<b$.
\end{lemma}
\begin{proof}
    Let $a<b$ and put $$B_n:=\{ y \in Y_n : g_n(y) \in [a,b] \}. $$
    Using Lemma \ref{lem:cute}, we have that
    $$ \mu_n(e^{-1}(B_n)) \leq G_n(\mu)(B_n)\leq \mu_n(e(B_n)). $$
    Moreover, we have that
    $$e(B_n) \subseteq \{ x \in X_n : f_n(x) \in [a-E(n), b+E(n)] \}$$ and hence 
    $$G_n(\mu_n)(B_n) \leq \int_{a-E(n)}^{b+E(n)} F(x) dx + O(h(n)) = \int_a^b F(x) dx + O_F(h(n) + E(n)).$$
    Here we used that $F$ is a continuous density function, therefore indeed we have 
    $$\int_b^{b+E(n)} F(x) dx= O_F(E(n)).$$

    Similarly, we have that 
     $$e^{-1}(B_n) \supseteq \{ x \in X_n : f_n(x) \in [a+E(n), b-E(n)] \}$$
     and the conclusion follows.
\end{proof}

%\begin{corollary}
%Let the setting be the same as above. Then for any function $H:\N\rightarrow \R_{>0}$ such that $H(n)\rightarrow \infty$ as $n\rightarrow \infty$ it holds that 
 %$$G_n(\mu_n)\left(\{y\in Y_n: H(n)^{-1}\leq |g_n(x)|\leq H(n)\}\right)\rightarrow 1,\quad n\rightarrow \infty $$
%\end{corollary}
%\begin{proof}
% This follows since the density function $F$ is assumed continuous.    
%\end{proof}

It is natural to ask a similar question about complex-valued functions $f_n: X_n \to \C$ and $g_n: Y_n \to \C$. In this case, we can obtain a similar result about the distributions of $(f_n)^*(\mu_n)$ and $(g_n)^*(G(\mu_n))$, but without uniformity: Suppose we know that there exists a density function $F: \C \to \R_{\geq 0}$ such that for all rectangles 
$$\mathcal{R}=\mathcal{R}(w;R_1,R_2):=\{ z\in \C: -R_1\leq \Re (z-w)\leq R_1, -R_2\leq \Im (z-w)\leq R_2  \},$$ we have
\begin{equation}
    \label{complex ball integral}
    \lim_{n \to \infty} \mu_n \left \{ x \in X_n : f_n(x) \in \mathcal{R}\right \} = \int_\mathcal{R} F(z) dz.  
\end{equation}
\begin{lemma}
\label{complex graph}
Let $f_n:X_n \to \C$ such that $(f_n)^*(\mu_n)$ obeys asymptotically a distribution given by continuous density function $F$, that is \eqref{complex ball integral} holds. Let $g_n: Y_n \to \C$ be a function such that for all $(x,y)\in G_n$ we have $|f_n(x)-g_n(y)| \leq E(n)$ for some $E:\R_{\geq 0} \to \R_{\geq 0}$ with $\lim_{n \to \infty} E(n)=0$. Then for any rectangle $\mathcal{R}=\mathcal{R}(w;R_1,R_2) \subset \C$, we have
\begin{equation}
    \lim_{n\to \infty} G_n(\mu_n) \left(\left \{ y \in X_n : g_n(y) \in \mathcal{R}\right \}\right) = \int_{\mathcal{R}} F(z) dz. 
\end{equation}
\end{lemma}

\begin{proof}
We proceed as in the proof of Lemma \ref{real graph}. Let 
$$A_n: = \left \{ y \in Y_n : g_n(y) \in \mathcal{R}\right \}.$$
Then 
$$e(A_n) \subseteq \{ x \in X_n : f_n(x) \in \mathcal{R}(w; R_1+ E(n),R_2+E(n))\},$$ 
by the assumption on $f_n$ and $g_n$. Similarly,  
$$e(A_n) \supseteq \{ x \in X_n : f_n(x) \in \mathcal{R}(w; R_1- E(n),R_2-E(n))\}.$$
Using Lemma \ref{lem:cute}, we have 
\begin{align*}
    &\mu_n (\{ f_n^{-1}\lr{\mathcal{R}(w; R_1-E(n),R_2-E(n)} \}) \\ &\leq G_n(\mu_n)(A_n)
    \leq \mu_n (\{ f_n^{-1}\lr{\mathcal{R}(w; R_1+ E(n),R_2+E(n)} \}).
\end{align*}
Conclusion follows by letting $n \to \infty$ since the density function $F$ is assumed continuous.
\end{proof}

\begin{remark}
    Using this approach, in the complex case we cannot obtain uniformity in the error term, for all rectangles $\mathcal{R} \subset \C$. This boils down to the fact that the error term is given by 
    $$\int_{\mathcal{R}(w;R_1 + E(n),R_2+E(n))} F(z) dz - \int_{\mathcal{R}(w;R_1,R_2)} F(z)$$
    and the region $\mathcal{R}(w;R_1 + E(n),R_2+E(n)) \setminus \mathcal{R}(w;R_1,R_2)$ has size $\approx 2(R_1+R_2) \cdot E(n)$ (as opposed to size $E(n)$ in the one dimensional real case).
\end{remark}

\subsection{Normal distribution and a proof of Theorem \ref{thm:CLTintro}}
In this section we will ``lift'' the normal distribution of the vertical periods $\{L_f(x):x\in X_N\}$ to the geodesic periods using the result of the preceding section. Recall the definition of the bipartite graph $G_N\subset X_N\times Y_N$ as in eq.\ (\ref{eq:G_N}) defined in terms of double cosets (\ref{eq:X_N}) and conjugacy classes (\ref{eq:Y_N}) of $\Gamma$. We equip $X_N$ and $Y_N$ with the probability measures $\mu_N$ and $\nu_N$ proportional to the counting measures: 
$$\mu_N(\{x\})= \frac{1}{|X_N|}, \quad \nu_N(\{y\})=\frac{1}{|Y_N|}.$$%Working with this graph will allow us to control the size of error terms.

To simplify notation, we denote by $\mu_N': Y_N \to \R_{\geq 0}$ the $G_N$-transform of  $\mu_N$;
$$\mu_{N}'(\{y\}) := G_N(\mu_N)(\{y\})= \frac{1}{|X_N|} \sum_{x \in e(y)} \frac{1}{\deg (x)}.$$

Firstly, we will prove an effective normal distribution for the real part of geodesic periods with respect to $\mu_N'$.

\begin{corollary}\label{cor:CLT}%{\color{red} As the ref points out this is not the same as Theorem \ref{thm:CLTintro}}
    Let $f$ be either a real valued Hecke--Maa{\ss} cusp form of weight $0$ and level $1$, associated to a holomorphic cusp form of level $1$ or to a holomorphic cusp form of weight $2$ for a  Fuchsian group $\Gamma$ of finite covolume with a cusp. Then for any $a<b$, 
    $$ \mu_{N}' \lr{\left \{ y\in Y_N : \Re \lr{\frac{\mathcal{P}_f(y)}{c_f \sqrt{\log N}}} \in [a,b] \right \}} = \frac{1}{\sqrt{2 \pi}} \int_{a}^b e^{-\frac{x^2}{2}} dx + O_f \lr{\frac{1}{\sqrt{\log N}}},$$
    where $c_f>0$ is as in (\ref{eq:variance}). 
\end{corollary}

\begin{proof}
We will apply Lemma \ref{real graph} to the graph $G_N$ and the functions 
    $$f_N(x) = \Re\lr{\frac{L_f(x)}{c_f \sqrt{\log N}}}, \quad g_N(y) = (-1)^{k/2+1} \Re\lr{\frac{\mathcal{P}_f(y)}{c_f \sqrt{\log N}}}. $$
    Note that for $x\in X_N$ and $y\in Y_N$ we have $c(x)\leq N$ and $\tr(y)\geq N/2$. Thus it follows by Proposition \ref{prop:main} that for any edge $(x,y)$ in  $G_N$ we have $f_N(x)=g_N(y)+O((\log N)^{-1/2}) $. Theorems \ref{thm:normaleffective1} and \ref{thm:normaleffective2} (with $\mathcal{R}=[a,b]\times (-\infty,\infty)$) imply that the effective estimate (\ref{rate of conv}) holds with $F$ the density function of the standard normal distribution $\mathcal{N}(0,1)$. This shows that the conditions in Lemma \ref{real graph} are satisfied with $E(N)=h(N)=(\log N)^{-1/2}$ which finishes the proof. \end{proof}
    
In particular, this yields a proof of Theorem \ref{thm:CLTintro} in the case of Hecke--Maa{\ss} forms for the modular group (with effective error term). Using the same argument as above, we obtain a proof of Theorem \ref{thm:CLTintro} in the case of holomorphic cusp forms for a general Fuchsian group $\Gamma$. This time we apply Proposition \ref{prop:main}, Theorem \ref{thm:normalnoneffective}, and Lemma \ref{complex graph} to the graph $G_N$ and 
    $$f_N(x) = \frac{L_f(x)}{c_f \sqrt{\log N}}, \quad g_N(y) = (-1)^{k/2+1} \frac{\mathcal{P}_f(y)}{c_f \sqrt{\log N}}. $$
    Furthermore, the constant $c_f>0$ is as in (\ref{eq:variance}).

%\begin{proof}[Proof of Theorem \ref{thm:CLTintro}]
    %The argument is the same as for Corollary \ref{cor:CLT}, this time applying Proposition \ref{prop:main}, Theorems \ref{thm:normaleffective2}, \ref{thm:normalnoneffective}, and Lemmas \ref{real graph}, \ref{complex graph} to the graph $G_N$ and 
  %  $$f_N(x) = \frac{L_f(x)}{c_f \sqrt{\log N}}, \quad g_N(y) = (-1)^{k/2+1} \frac{\mathcal{P}_f(y)}{c_f \sqrt{\log N}}. $$
  %  Furthermore, the constant $c_f>0$ is as in (\ref{eq:variance}).
%\end{proof}

%The last two corollaries show that if we count closed geodesics with ``weights'' inferred by the graph $G_N'$, then we obtain normal distribution of the geodesics periods. As mentioned in the introduction we believe that the normal distribution should hold for the counting measure on $Y_N'$. %For the rest of this paper, by studying properties of this graph, we obtain results about the distribution of the geodesic periods with respect to the counting measure $\nu_N$, in particular with applications to the zero set. %

\section{Non-vanishing of central values of $L$-functions}
\label{sec:application}

Denote by $\Df^+$ the set of positive fundamental discriminants, that is
\begin{align*}
    \Df^+ := \Bigg \{ D>0: \begin{array}{l}  D \equiv 1 \mods{4}\ \text{ and } D \text{ square-free or} \\ 
     D=4m, \text{ where } m \equiv 2,3 \mods{4} \text{ and } m \text{ square-free} \end{array} \Bigg \}.
\end{align*}

Let $D \in \Df^+$ and $K=\Q(\sqrt{D})$ the associated real quadratic field. Let $N$ be a square-free integer coprime with $D$ such that each prime divisor $p$ of $N$ splits in $K$. Hence there exists an integer $\alpha$ such that $D \equiv \alpha^2 \mods{4N}$. A quadratic form $Q=[a,b,c]$ is said to be {\it Heegner of level $N$} if $N\mid a$ and $b \equiv \alpha \mods{2N}$. Denote by $\mathcal{Q}_{N,D}$ the set of primitive quadratic forms Heegner of level $N$ and discriminant $D$. Then $\mathcal{Q}_{N,D}$ is stable under the action of $\Gamma_0(N)$ and we have isomorphisms
\begin{align*}
    \Gamma_0(N) \backslash \mathcal{Q}_{N,D} \xlongrightarrow{\sim} \mathrm{SL}_2(\Z) \backslash \mathcal{Q}_{1,D}
\end{align*}
and $$\Cl_D^+ \xlongrightarrow{\sim} \Gamma_0(N) \backslash \mathcal{Q}_{N,D},$$
where $\Cl_D^+$ is the narrow class group of $\Q(\sqrt{D})$. Therefore, for each $A \in \Cl_D^+$, there is an associated primitive geodesic $\mathcal{C}_A$ in $\mathcal{X}_0(N):=\Gamma_0(N) \backslash \H$ of length $2\log \epsilon_D$, we refer to \cite{darmon94} and \cite[Chapter 6]{popa06} for  details. We denote by $h^+(D)=|\Cl^+_D|$ the (narrow) class number and by $J=(\sqrt{D})\in \Cl_D^+$ the different.

For a fixed square-free $N$, we are interested in working with fundamental discriminants such that the above holds. A prime $p$ splits in $\Q(\sqrt{D})$ exactly if $(p,D)=1$ and $D$ is quadratic residue modulo $p$. Therefore, from the Chinese Remainder Theorem, there exists $I \subset \{1,2, \ldots, 4N-1 \}$ such that all prime divisors $p\mid N$ split in $\Q(\sqrt{D})$ exactly if $ (D  \ \mathrm{mod } \ 4N) \in I$.
%\begin{align*}
   % \forall p |N, p=\mathfrak{p}_1 \mathfrak{p_2} \text{ in } \Q(\sqrt{D}) \Longleftrightarrow D  \ \mathrm{mod } \ 4N \in I .   
%\end{align*}
We denote 
$$\DNf := \{ D\in \Df^+ : (D  \ \mathrm{mod } \ 4N) \in I\}$$
and 
$$\DNf(X) : = \{D \in \DNf : \ed \leq X \}.$$
From work of Hashimoto \cite{H13}, we know there exists a constant $a(N)$ such that
\begin{equation}\label{eq:Hashimoto}
    \sum_{D \in \DNf (X)} h^+(D) = a(N) \Li (X^2) + O_{N, \epsilon} \lr{X^{25/13 + \epsilon}}. 
\end{equation}

Let $f$ be a Hecke--Maa{\ss} newform of weight $k\in 2\Z_{\geq 0}$ for $\Gamma_0(N)$. For $D \in \DNf$, $K = \Q(\sqrt{D})$ and $\chi\in \widehat{\Cl_D^+}$ a class group character, we have Waldspurger formula in its explicit form (see \cite[Theorem 6.3.1]{popa06} and \cite[Lemma 6.14]{HumNor20}):
%We denote the set of positive fundamental discriminants by $\Df$. Recall from section \ref{!!!} that for each $D \in \Df$  and $A \in \ClD$, we have an associated geodesic $\mathcal{C}_{A}$ on $\PSL_2(\Z) \backslash \H$ of length $\log \ed^2$. Let $f$ be a newform of weight $2k \geq 0$ for $\PSL_2(\Z)$. We have the following version of Waldspurger formula, given by Popa \cite[Theorem 6.3.1]{popa06}:
\begin{equation}\left| \sum_{A\in \Cl_D^+}\chi(A) \mathcal{P}_f(\CC_A)\right|^2= D^{1/2} c_{f,\chi(J)}L(f\otimes\theta_\chi,1/2)%\begin{cases}
  % \left| \sum_{A\in \Cl_D^+}\chi(A) \mathcal{P}_{R_0f}(\CC_A)\right|^2,& k=0, \epsilon(f)=-\chi(J) \\
   % \left| \sum_{A\in \Cl_D^+}\chi(A) \mathcal{P}_f(\CC_A)\right|^2,&\text{else}
%\end{cases}
,\end{equation}
where $\theta_{\chi}$ is the theta-series associated to $\chi$, and $c_{f,\chi(J)}\geq 0$ is a constant depending only on $f$ and $\chi(J)\in \{\pm1 \}$ such that $c_{f,\chi(J)}=0$ iff $k=0$ and $\epsilon(f)=-\chi(J)$. Note here that the geodesic periods used in \cite[(6.1.2)]{popa06} do indeed match the geodesic periods $\mathcal{P}_f(\CC_A)$ up to a constant depending only on $f$ as follows from the explicit formulas in \cite[Sec. 6.1]{popa06}. By applying Plancherel (orthogonality of characters) to the Waldspurger formula as above, we obtain
\begin{equation}
    \label{plancherel}
    \frac{D^{1/2}}{h^+(D)}\sum_{\chi \in \widehat{\Cl_D^+}} c_{f,\chi(J)}L(f\otimes\theta_\chi,1/2)=\sum_{A\in \Cl_D^+} \left| \mathcal{P}_f(\CC_A) \right|^2.
\end{equation}
Using this together with our previous results, we obtain the following non-vanishing result for central values of Rankin-Selberg $L$-functions.
\begin{theorem} There exists a constant $c=c(N)>0$ depending only on $N$ such that, as $X \to \infty$,
    \begin{equation}\label{eq:statecorollary}
        \frac{ |\{ D \in \DNf (X) : \exists \chi\in \widehat{\Cl_D^+}\text{ s.t. } L(1/2,f\otimes \theta_\chi)\neq 0\}|}{| \DNf (X)|}\geq c+o(1).\end{equation} 
\end{theorem}

\begin{proof}
    Let $\Df^+ (X):= \{ D \in \Df^+ : \ed \leq X \}$. We use the moments of $h(D)$ restricted to fundamental discriminants, as computed by Raulf \cite{raulf16}.  More precisely, we have that for each $k \in \N \cup \{ 0 \}$, there exists constants $C(k),\delta_k>0$ such that
    $$\sum_{D \in \Df^+(X)} h(D)^k = C(k) \int_2^X \lr{\frac{t}{\log t}}^k dt + O \lr{X^{k+1-\delta_k}}=\frac{C(k)}{k+1}\frac{X^{k+1}}{(\log X)^k}(1+o(1)).$$
 We denote 
    $$\mathcal{D}_{\mathrm{good}}(X) : = \left\{ D \in \DNf (X) : \exists \chi \in \widehat{\Cl_D^+} \text{ with }L(1/2, f \otimes \theta_{\chi} ) \neq 0 \right\} \subset \DNf (X). $$ 
    Note that if $D \in \DNf (X) \setminus \mathcal{D}_{\mathrm{good}}(X) $, we see from \eqref{plancherel} that $\mathcal{P}_f(\CC_A) = 0$ for all $A \in \Cl_D^+$. In particular,  
    \begin{equation*}
        \bigcup_{D \in \DNf (X) \setminus \mathcal{D}_{\mathrm{good}}(X)}  \{\mathcal{C}_A:A \in \Cl_D^+\} \subseteq \left \{ \mathcal{C} \subset \mathcal{X}_0(N) : \ell(\mathcal{C}) \leq 2 \log X, \mathcal{P}_f(\CC) =0 \right \}.
    \end{equation*}
    Fix $\epsilon>0$. Using Theorem \ref{thm:main} and the prime geodesic theorem (\ref{eq:sizeXY}), we obtain that for $X$ sufficiently large
    $$\sum_{D \in \DNf (X) \setminus \mathcal{D}_{\mathrm{good}}(X)} h^+(D) \leq \epsilon \frac{X^2}{\log X}.$$
    Using Cauchy--Schwarz, and the fact that $\mathcal{D}_{\mathrm{good}}(X) \subseteq \DNf(X) \subseteq \Df(X)$, it follows that for $X$ sufficiently large
    \begin{align*}
        (C(2)/3 + \epsilon) \frac{X^3}{(\log X)^2}  &\geq \sum_{D \in \Df^+ (X)} h^+(D)^2 \geq \sum_{D \in \mathcal{D}_{\mathrm{good}}(X)} h^+(D)^2\\
        &\geq \frac{ \lr{\sum_{D \in \mathcal{D}_{\mathrm{good}}(X)} h^+(D) }^2 }{|\mathcal{D}_{\mathrm{good}}(X)|} \geq \frac{(a(N)-\epsilon)^2 \frac{X^4}{(\log X)^2}}{|\mathcal{D}_{\mathrm{good}}(X)|}, 
    \end{align*}
with $a(N)$ as in eq.\ (\ref{eq:Hashimoto}).    This implies that for any $\epsilon>0$, we have for $X$ sufficiently large that  
    $$|\mathcal{D}_{\mathrm{good}}(X)|\geq \frac{(a(N)-\epsilon)^2}{C(2)/3+\epsilon} X.$$
   This implies that the lower bound (\ref{eq:statecorollary}) holds with  $c=\frac{a(N)^2}{C(2)/3} \beta$ where
    %is contradiction if $\alpha <\frac{a(N)^2}{C(2)}$. In particular, this shows that for $X$ large enough, $\frac{|\DNf (X)|}{X} \geq \alpha >0$. Hence let 
    \begin{equation*}
        \beta := \liminf_{X\to \infty} \frac{X}{| \DNf (X)|}\geq\liminf_{X\to \infty} \frac{X}{| \Df^+ (X)|}\geq C(0)^{-1}  >0,
    \end{equation*}
    where $C(0)>0$ is as in Raulf's work \cite{raulf16} mentioned above.\end{proof}

\begin{remark}
    If one applies H\"{o}lder inequality instead of Cauchy--Schwarz one obtains $c =\lr{\frac{a(N)^k}{C(k)/(k+1)}}^{1/(k-1)}\cdot\beta$ for any $k\geq 1$. However, none of the constants $C(k)$ have been computed besides $k=0,1$, and the upper bounds of $C(k)$ from \cite{raulf16} would indicate fast decay as $k \to \infty$.
    %We expect that this approach would lead to a percentage of 100\%.
\end{remark}
%\bibliography{references.bib}
%\bibliographystyle{amsplain}  
\AtNextBibliography{\small}
\printbibliography[heading=bibintoc,title={References}]
\end{document}